\newtheorem{theorem}{Theorem}[section]
\newtheorem{lemma}[theorem]{Lemma}
\newtheorem{remark}[theorem]{Remark}
\newtheorem{algo}[theorem]{Algorithm}
\def\ts{\thinspace}
\renewcommand{\epsilon}{\varepsilon}
\newcommand{\R}{\mathbb{R}}
\newcommand{\bigo}{\mathcal{O}}
\def \IR{\mathbb R}
\newcommand{\EE}{\IR^d}
\def \IN{\mathbb N}
\def \IC{\mathbb C}
\newcommand{\calS}{\mathcal{S}}
\renewcommand{\R}{\mathbb{R}}
\title{
	A fully adaptive explicit stabilized integrator for advection-diffusion-reaction problems
}
\author{ 
	Ibrahim Almuslimani\textsuperscript{*}
}
\begin{document}
	\maketitle
	\footnotetext[1]{Univ Rennes, INRIA Rennes, IRMAR - UMR 6625, F-35000 Rennes, France. \\ \hspace*{7mm}Ibrahim.Almuslimani@univ-rennes1.fr.}

\begin{abstract}
A novel second order family of explicit stabilized Runge--Kutta--Chebyshev methods for advection--diffusion--reaction equations is introduced. The new methods outperform existing schemes for relatively high Peclet number due to their favorable stability properties and explicitly available coefficients. The construction of the new schemes is based on stabilization using second kind Chebyshev polynomials first used in the construction of the stochastic integrator SK-ROCK. An adaptive algorithm to implement the new scheme is proposed. This algorithm is able to automatically select the suitable step size, number of stages, and damping parameter at each integration step. Numerical experiments that illustrate the efficiency of the new algorithm are presented.
		
\smallskip
\noindent
{\it Keywords:\,}
advection-diffusion-reaction equations, explicit stabilized methods, Runge-Kutta Chebyshev methods, RKC, SK-ROCK, ARKC.
\smallskip

\noindent
{\it AMS subject classification (2010):\,}
65L04, 65L20, 65M12
\end{abstract}

\section{Introduction}
In this paper we use the idea of stabilization by combining first and second kind Chebyshev polynomials introduced in \cite{AAV18} to derive explicit stabilized methods for advection--diffusion problems with, possibly, costly non-stiff reaction terms,
\begin{equation*}
	\partial_t u(x,t)= \nabla\cdot( D\nabla u(x,t))-\nabla\cdot(\textbf vu(x,t))+r(u(x,t)),\quad (x,t)\in \Omega\times [0,T],
\end{equation*}
with initial and boundary conditions, where $\Omega\in\EE$, $D$ is the matrix of diffusion coefficients, and $\textbf v$ is the velocity vector. The function $r$ represents non-stiff, but possibly costly, reaction terms. Note that in general, $D$ and $\textbf v$ may also depend on $u$ leading to nonlinear diffusion and advection terms. In the linear one dimensional setting, the equation reduces to
\begin{equation}\label{eq:ADlin}
	\partial_t u(x,t)= d\partial ^2_xu(x,t)-a\partial_xu(x,t)+r(u(x,t)),\quad (x,t)\in \Omega\times [0,T]
\end{equation}
where $d$ and $a$ are positive reals, and $\Omega$ is a real interval. The Peclet number is defined by $a/d$ and is allowed here to be quite large. When discretizing the partial differential equation (PDE) \eqref{eq:ADlin} in space using centered finite difference for example, with mesh size $\Delta x$, We obtain a system of ordinary differential equations (ODEs) of the form
\begin{equation}\label{eq:odead}
	\dot y(t)=F_D(y(t))+F_A(y(t)),\qquad y(0)=y_0\in\EE,\qquad t\in[0,T],
\end{equation}
where $F_D$ represents the diffusion term with eigenvalues of its Jacobian grow as $1/\Delta x^2$ on the negative real axis, and $F_A$ represents the advection term (and possibly non-stiff reaction terms) with eigenvalues of its Jacobian are of size $1/\Delta x$ and located close to the imaginary axis and symmetric with respect to the origin. This means that the eigenvalues of the Jacobian of the obtained system are approximately located in an ellipse with the length of its minor axis proportional to the square root of the length of the major axis.

Explicit stabilized Runge--Kutta--Chebyshev methods were originally introduced in the context of purely diffusive or diffusion dominated advection--diffusion problems (very small Peclet number) as a compromise between costly implicit methods and restrictive usual explicit schemes \cite{Abd01,Abd02,Abd13c,AbM01,HoS80}. Due to their versatility, they were extended to many other types of problems such as advection--diffusion--reaction equations {\cite{AbV12a,TX20,VS04,VSH04,Zb11}}, stochastic differential equations (SDEs) {\cite{AAV18,AbdS20,AbL08,AVZ13}}, and optimal control problems \cite{AV19}. {Other types of stabilized methods were studied in \cite{TJ07a,TJ07b,KA12}.}

Typically, the stability domain of an explicit stabilized method contains a long narrow strip around the negative real axis. In the context of advection--diffusion problems, the authors of the article \cite{VSH04} propose the usage of the RKC method with very large damping parameter to make the strip wider, which means that eigenvalues with slightly larger imaginary parts coming from the advection terms can be put in. This comes at the cost of a serious shortening of the strip, which means that, for a fixed time step, less eigenvalues with negative real parts can be put in. Later, a partitioned Runge--Kutta--Chebyshev method (PRKC) of order $2$ was designed in \cite{Zb11} based on the RKC method \cite{SSV98} for the integration of ODEs that have a moderately stiff term (diffusion) and non-stiff terms (advection or costly reaction terms). PRKC has a limited stability for the advection term, and it shares with the standard RKC the same stability domain length over the negative real axis. In \cite{AbV12a}, the authors propose a partitioned implicit--explicit orthogonal Runge--Kutta method (called PIROCK) for the time integration of advection--diffusion--reaction problems with possibly severely stiff reaction terms and stiff stochastic terms. The diffusion terms are solved by the explicit, nearly optimal, second order orthogonal Chebyshev method (ROCK2). Applied to advection--diffusion problems, the method has order 2 of accuracy and can handle the large Peclet number regime but it needs very large damping that reduces a lot its stability domain length over the negative real axis. In addition, PIROCK relies on the ROCK2 method, for which no explicit formulas are available to compute the coefficients for a given stage number.
{Recently, the authors of \cite{TX20} developed an improved version of the RKC method (called IMPRKC) for advection--diffusion--reaction equations. Their idea is based on introducing an appropriate combination of RKC polynomials which leads to a significant increase of the width of the stability domain along the imaginary axis with almost no loss of its length along the real axis. This comes at the cost of a few additional function evaluations. The main drawback of the IMPRKC scheme is evaluating $F_A$ at each stage of the method.}
This rich literature shows that the domain of stabilized schemes is very active, and that the construction of an adaptively efficient explicit stabilized integrator for such important class of problems is quite challenging.

In \cite[Sect.\ts3.7.2]{A20}, the author of the present paper profited from the idea of stabilization using second kind Chebyshev polynomials, first introduced in \cite{AAV18} {in the context of SDEs}, to construct a first order explicit stabilized method for advection--diffusion--reaction equations with optimal stability domain. What made this quite intuitive is the similarity between mean square stability for SDEs and the stability of the test equation for ODEs of the form \eqref{eq:odead} (see Sect. \ref{sec:stab}). In this work, we construct a second order integrator based on RKC that outperforms the existing  methods in the literature. We propose a fully adaptive algorithm to implement the new second order method.

This paper is organized as follows: in Section \ref{sec:esm}, we give a fast revision on explicit stabilized methods. In Section \ref{sec:o1} we present an \emph{optimal} first order explicit stabilized method for advection--diffusion--reaction equations that we constructed in the thesis \cite{A20} inspired by previous work on SDEs \cite{AAV18}. In Section \ref{sec:o2} we derive and analyze our new second order adaptive scheme in terms of stability and convergence, and we propose a local error estimator for automatic step size selection. Section \ref{sec:num} is dedicated to present and analyze some numerical experiments that illustrate the efficiency of the new schemes. {Finally, we conclude in Section \ref{sec:conclusion}.}

\section{Preliminaries on explicit stabilized methods}\label{sec:esm}
This section is devoted to present useful standard materials.

In order to study the stability of a Runge--Kutta integrator applied to an ODE, the following approach is widely used \cite{Abd13c,HaW96}. Consider the test ODE
\begin{equation}\label{eq:odetest}
	\dot y(t) = \lambda y(t),\quad y(0)=y_0, 
\end{equation}
where $\lambda\in\IC$ with negative real part. If we apply a Runge--Kutta method with step size $h$ on \eqref{eq:odetest}, we get the relation $y_{n+1}=R(h\lambda)^ny_0$, where the rational function $R(z)$ is called the stability function. Hence, the stability domain of the method is defined as
\begin{equation*}
	{\calS}:=\{z\in \mathbb{C}; |R(z)|\leq 1\}.
\end{equation*}
In the particular case of explicit methods, $R(z)$ is a polynomial which means that the stability domain is necessarily bounded. For example, the stability domain of the explicit Euler method is just a disk of radius $1$ which explains the time step restriction it faces for stiff ODEs, while that of the implicit Euler method is the complementary of a disk of radius $1$. This shows the advantage of implicit methods in terms of stability, however, for large dimensional problems and especially the nonlinear and ill-conditioned ones, implicit methods become very costly and difficult to implement. Here appears the need to a compromise between classical explicit methods and implicit integrators.

This compromise is "explicit stabilized methods" (see the survey \cite{Abd13c}). The idea is to construct explicit Runge--Kutta integrators with extended stability domain that grows quadratically with the number of stages $s$ of the method along the negative real axis, and then allows to use large time steps typically for problems arising from \emph{diffusion dominant} advection--diffusion--reaction PDEs for which the eigenvalues are close to the negative real axis and are very large in modulus. 

Before proceeding, let us recall some useful facts on Chebyshev polynomials. The first kind Chebyshev polynomials are defined by
\begin{equation}\label{eq:chebT}
	T_0(x)=1,\quad T_1(x)=x, \quad T_j(x)=2xT_{j-1}(x)-T_{j-2}(x),\quad j\geq2.
\end{equation}
The second kind Chebyshev polynomials are defined by
\begin{equation}\label{eq:chebU}
	U_0(x)=1,\quad U_1(x)=2x, \quad U_j(x)=2xU_{j-1}(x)-U_{j-2}(x),\quad j\geq2.
\end{equation}
Moreover, the two kinds polynomials satisfy the following
\begin{equation}\label{eq:TU}
	U_{j-1}(x)=\frac{T_j'(x)}{j}.
\end{equation}
The stabilization procedure is based on the above relations. The fact that both kinds share the same recurrence relation will be very useful in our analysis. {Indeed, this allows to simultaneously derive the recurrence formulas of the methods, otherwise, the cost would be doubled.}

{
	It was shown that for any explicit, consistent (order $1$) Runge--Kutta method, the maximum stability domain length over the negative real axis is $2s^2$, where $s$ is the number of stages of the method. The polynomial that achieves this \emph{optimal} length is the shifted Chebyshev polynomial $T_s(1+z/s^2)$. See, for example,  \cite[Chap.\ts V, Th.\ts 1.1]{HV03}. For robustness reasons, a damping of this polynomial is introduced and the resulting scheme is recalled in the following subsection.
}

\subsection{Optimal first order Chebyshev methods}
Consider the ODE
\begin{equation}\label{eq:ode}
	\dot y=f(y), \qquad y(0)=y_0,\qquad t\in[0,T].
\end{equation}
Given $y_0$, in order to compute $y_1\approx y(h)$ using the optimal first order Chebyshev method applied to \eqref{eq:ode} with step size $h$, the following recurrence is applied
\begin{eqnarray} 
	\label{eq:Cheb1} 
	K_0 &=& y_0, \quad
	K_1 = K_0+ \mu_1 h f(K_0), \nonumber \\
	K_j &=&\mu_ihf(K_{j-1})+\nu_iK_{j-1}+(1-\nu_i)K_{j-2},\quad j=2,\ldots,s\\
	y_{1} &=& K_s \nonumber,
\end{eqnarray}
where $\omega_{0}:=1+\frac{\eta}{s^{2}},\, \omega_{1}:=\frac{T_{s}(\omega_{0})}{T'_{s}(\omega_{0})}$, and
\begin{equation}\label{eq:coeffscheb}
	\mu_1:=\frac{\omega_1}{\omega_0},\quad
	\mu_j:=\frac{2\omega_1T_{j-1}(\omega_0)}{T_j(\omega_0)},\quad 
	\nu_j:=\frac{2\omega_0T_{j-1}(\omega_0)}{T_j(\omega_0)},\quad j=2,\ldots,s.
\end{equation}
The parameter $\eta$ is called the damping parameter and it is necessary to avoid singularities in the stability domain which ensures the robustness of the method {(See Figure \ref{fig:stabdomcheb})}. Typically for this method, $\eta$ is fixed to $0.05$. It can be easily verified, using the recurrence \eqref{eq:chebT} and proceeding by induction, that applied to the test problem \eqref{eq:odetest}, the above method produces after one step $y_1=K_s=R_s(h\lambda)y_0$ with
\begin{equation*}
	R_s(z) =\frac{T_{s}(\omega_0+\omega_1 z)}{T_{s}(\omega_0)},
\end{equation*}
for which the stability domain contains a narrow strip around the interval $[-C_\eta s^2,0]$ with  $${C_\eta=\frac{1+\omega_0}{s^2\omega_1}}\simeq 2-4/3\eta$$ is very close to $2$ (the optimal value for order 1). {For $\eta=0$ the stability function is again $T_s(1+z/s^2)$.} The method has low memory requirements (only two stages have to be stored) and reasonable propagation of round-off errors even for large values of $s$ needed in practice {\cite{HoS80,VHS90}.} 
The fact that the length of the stability domain on the negative  real axis enjoys a quadratic growth with respect to the number of stages $s$ is crucial to the success of explicit stabilized Runge--Kutta methods.

\begin{figure}
		\centering
		\includegraphics[width=0.9\linewidth]{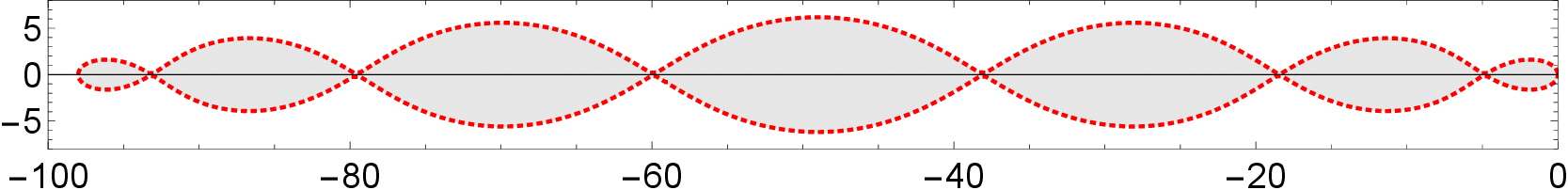}\\[2.ex]
		\includegraphics[width=0.9\linewidth]{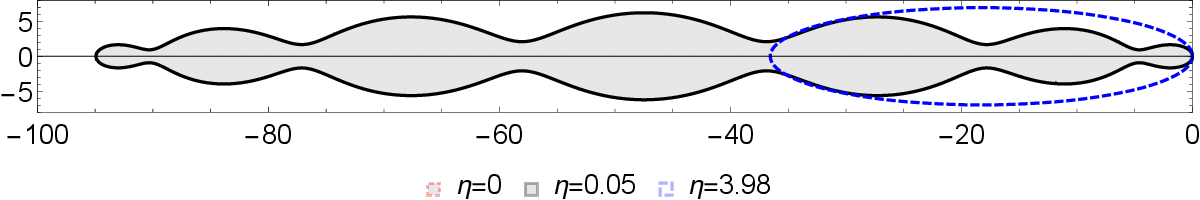}
	\caption{
		{Stability domains of the Chebyshev method \eqref{eq:Cheb1} for $s=7$ and different damping values $\eta=0,0.05,3.98$.}
		\label{fig:stabdomcheb}}
\end{figure}

\subsection{Second order RKC methods}

To design a second order method, we need the stability polynomial to satisfy\footnote{Indeed, up to order two, the order conditions for nonlinear problems are the same as the order conditions for linear problems \cite[Chap.\ts III]{HLW06}.}
\begin{equation*}
	R(z)=1+z+\frac{z^2}2+\mathcal{O}(z^3).
\end{equation*}
{A correction to the first order shifted Chebyshev polynomials was introduced to construct a stabilized scheme of order 2 \cite{B71,VHS80,VHS90}. The obtained second order polynomial is the following}
\begin{equation*}
	R_s(z) = a_s+b_sT_s(\omega_0+\omega_2z),
\end{equation*}
where,
\begin{equation}\label{eq:omegarkc}
	a_s=1-b_sT_s(\omega_0),\quad
	b_s=\frac{T''_s(\omega_0)}{(T'_s(\omega_0)^2)},\quad
	\omega_0=1+\frac{\eta}{s^2},\quad
	\omega_2=\frac{T'_s(\omega_0)}{T''_s(\omega_0)},\quad
	\eta=0.15.
\end{equation}
For each $s$, $|R_s(z)|$ remains bounded by $a_s+b_s=1-\eta/3 + \bigo(\eta^2)$ for $z$ in the stability interval (except for a small interval near the origin). The stability interval along the negative real axis is $[-\frac{1+\omega_0}{\omega_2},0]$ which is approximately $[-0.65s^2,0]$, and covers about $80\%$ of the optimal stability interval for second order stability polynomials, and the formula now for calculating $s$ for a given time step $h$ is
\begin{equation*}
	s := \left[\sqrt{\frac{h\lambda_{\max}+1.5}{0.65}}+0.5\right],
\end{equation*}
{where the brackets mean rounding to the nearest integer, and $\lambda_{\max}$ is the spectral radius of  the Jacobian of $f$ that can be calculated at each step using power method for example}.
Using the recurrence relation of the Chebyshev polynomials, the RKC method as introduced in {\cite{VHS90}} is defined by
\begin{equation} 
	\label{eq:rkc}
	\begin{split}
		K_0 &= y_0, \quad
		K_1 = K_0+h b_1\omega_2 f(K_0), \\
		K_j &=\mu_jh(f(K_{j-1})-a_{j-1}f(K_0))+\nu_jK_{j-1}+\kappa_jK_{j-2}+(1-\nu_j-\kappa_j)K_0, \\
		y_{1} &=K_s ,
	\end{split}
\end{equation}
where
\begin{equation}\label{eq:coeffsrkc}
	\mu_j=\frac{2b_j\omega_2}{b_{j-1}},~~ 
	\nu_i=\frac{2b_j\omega_0}{b_{j-1}},~~ 
	\kappa_j=-\frac{b_j}{b_{j-2}},~~
	b_j=\frac{T''_j(\omega_0)}{T'_j(\omega_0)^2},~~
	a_j=1-b_jT_j(\omega_0),
\end{equation}
for $j=2,\ldots,s$. The parameters $b_0$ and $b_1$ are free ($R_0(z)$ is constant and only order 1 is possible for $R_1$(z)) and the values $b_0=b_1=b_2$ are suggested in \cite{SV80}.

\begin{remark}
	For simplicity of the presentation, we consider the case of autonomous problems (with $f$ independent of time) but we highlight that our approach also applies straightforwardly to non-autonomous problems $\dot y(t)=f(t,y(t))$. Indeed, a standard approach is to consider the augmented system with $z(t)=t$, i.e. $\frac{dz}{dt}=1,~z(0)=0$ and define $\tilde{y}(t)=(y(t),z(t))^T$, see e.g. \cite[Chap.\ts III]{HLW06} for details.
\end{remark}

\section{Optimal first order scheme}\label{sec:o1}

Note that we have discussed the content of this section in the thesis \cite{A20}, but we recall it here since it gives insight about the construction of the second order adaptive integrator in the next section.

Consider the linear test problem
\begin{equation}\label{eq:adtest}
	\dot y=\lambda y+i\mu y,\qquad y(0)=y_0,
\end{equation}
where $\lambda \in \R^-$, $\mu\in\R$, and $i=\sqrt{-1}$. Applying a Runge--Kutta method to the above equation, one gets an induction of the form
\begin{equation*}
	y_{n+1}=R(p,q)y_n,
\end{equation*}
with $p=h\lambda$ and $q=h\mu$. We define the stability domain of a Runge--Kutta method applied to \eqref{eq:adtest} by
\begin{equation*}
	\calS=\{(p,q)\in \R^2~;~|R(p,q)|\leq 1\}.
\end{equation*}
Equation \eqref{eq:adtest} can be seen as the test equation for linear SDEs with the $i\mu$ replacing the noise. Hence, inspired by SK-ROCK \cite{AAV18}, we consider the following stability polynomial
\begin{equation}\label{eq:stabfundaeq}
	R(p,q) = A(p)+B(p)iq := \frac{T_s(\omega_0+\omega_1 p)}{T_s(\omega_0)} 
	+ \frac{U_{s-1}(\omega_0+\omega_1 p)}{U_{s-1}(\omega_0)}(1+ \frac{\omega_1}2 p) iq,
\end{equation}
where $T_s$ and $U_s$ are the first and the second kind Chebyshev polynomials of degree $s$ (the number of stages), and the coefficients $\omega_0$ and $\omega_1$ are the same as for the Chebyshev method \eqref{eq:Cheb1}.
The stability condition $|R(p,q)|\leq 1$ is equivalent to $A(p)^2+B(p)^2q^2\leq1$ which is exactly the mean square stability condition described in \cite[Sect. 2]{AAV18}. By \cite[Theorem 3.2]{AAV18} and \cite[Remark 3.6]{AAV18}, for $\eta>0$ and $s\in\IN$, $|R(p,q)|\leq1$ for all $p\in [-2\omega_1^{-1},0]$ and all $q$ such that $|q|\leq\sqrt{-2p}$ (See Figure \ref{fig:stabadvdiff}).

{
\begin{remark}
	For SDEs, the condition $|q|\leq\sqrt{-2p}$ guarantees the stability of the solution of the continuous problem, which means that numerical stability under this condition is sufficient. However, for our test equation \eqref{eq:adtest}, the exact solution is stable when $\lambda\leq0$ and $\mu\in\R$, hence even when $|q|\leq\sqrt{-2p}$ is violated. Although stability under this condition is already a significant improvement comparing to standard explicit methods, we can use larger damping parameter $\eta$ to increase the width of the stability region along the imaginary direction in the case of large advection.
\end{remark}
}


\begin{figure}[t]
	\centering
	\includegraphics[width=0.9\linewidth]{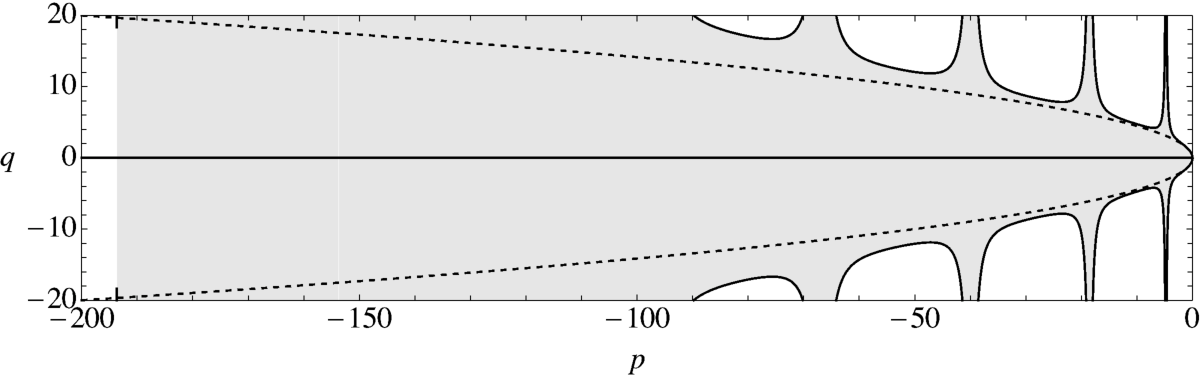}
	\caption[Stability domain of the new first order method]{Stability domain of the new optimal first order method \eqref{eq:adc} in the $p-q$ plane for $s=10$ and $\eta=0.05$. The dashed lines correspond to $\pm\sqrt{-2p}$.}
	\label{fig:stabadvdiff}
\end{figure}

The new order one method for space discretized advection--diffusion--reaction equations \eqref{eq:odead}, is defined in the same way as the SK-ROCK method \cite{AAV18}, by replacing the noise term by the advection-reactions terms,
\begin{equation}\label{eq:adc}
	\begin{split}
		K_0&=y_0 \\
		K_1&=y_0+\mu_1hF_D(y_0+\nu_1 hF_A(y_0)) +\kappa_1hF_A(y_0)\\
		K_j&=\mu_jhF_D(K_{j-1})+\nu_jK_{j-1}+\kappa_jK_{j-2},\quad j=2,\dots,s,\\
		y_1&=K_s,
	\end{split}
\end{equation}
where $\nu_1=s\omega_1/2$, $\kappa_1=s\omega_{1}/\omega_{0}$ and the rest of the coefficients are identical to those defined in \eqref{eq:coeffscheb}. Assuming enough regularity on $F_D$ and $F_A$, the convergence proof is straightforward and based on \cite[Lemma 4.2]{AAV18}. {The above scheme is also optimal in the sense that its stability region achieves the maximum possible length over the negative real axis for an explicit consistent scheme.}

Note that the method requires only 1 evaluation of the advection-reaction terms $F_A$ per time step, while other existing methods for advection diffusion problems such as PIROCK \cite{AbV12a} and PRKC \cite{Zb11} require 3 and 4 evaluations of $F_A$ respectively per time step. It also outperform the mentioned methods by far in terms of stability. However, it has only order 1 of accuracy, which motivates the construction of a second order integrator that competes with existing schemes in terms of stability and convergence, taking advantage of the above analysis.

Note that we can adaptively increase the stability for advection by increasing $\eta$ in the price of loosing some length on the negative real axis. This will be explained in details in the next section for the second order scheme, for which this adaptive increase of damping is a key feature.
\section{New second order scheme} \label{sec:o2}
In this section, we introduce the new adaptive second order integrator based on the material presented in the first 3 sections and inspired by the SK-ROCK method introduced in \cite{AAV18}.
\paragraph{ARKC integrator} The ARKC integrator (for adaptive RKC) applied to \eqref{eq:odead} is defined as follows for $s\geq2$:
\begin{equation}\label{eq:ARKC}
	\begin{split}
		G&=hF_A(y_0+\frac h2F_A(y_0+\frac{\omega_2}2hF_D(y_0))+\frac h2F_D(y_0))+hF_D(y_0+\frac{\omega_2-1}2hF_A(y_0))\\
		&-hF_D(y_0),\\
		K_{-1}&=y_0,\\	
		K_0&=K_{-1}+\frac{\omega_2}2G,\\
		K_1&=K_0+b_1\omega_2hF_D(y_0)+\alpha G, \quad \text{and for } j=2,\dots,s,\\
		K_j&=\mu_jh(F_D(K_{j-1})-F_D(K_0)+(1-a_{j-1})F_D(y_0))+\nu_jK_{j-1}+\kappa_jK_{j-2}\\
		&+(1-\nu_j-\kappa_j)K_0,\\
		y_{1}&=K_s,
	\end{split}
\end{equation}
where $\alpha=\left(1-\frac{\omega_2}2\right)b_1s\omega_2,$
and the other coefficients are identical to those defined in \eqref{eq:omegarkc} and~\eqref{eq:coeffsrkc}. Note that for purely diffusive equations ($F_A\equiv0$), the method reduces to standard RKC \eqref{eq:rkc}.

\paragraph{Complexity} The method requires $s+2$ evaluations of $F_D$ and $3$ evaluations of $F_A$ per time step. The standard RKC and the PRKC methods both require $s$ evaluations of $F_D$ per time step, but RKC requires $s$ evaluations of $F_A$ while PRKC requires only $4$. The PIROCK merhod needs $s+2+l$ evaluations of $F_D$ ($l=1$ or $2$) and $3$ evaluations of $F_A$ per time step. {Finally, the IMPRKC scheme requires $s+\hat s$ evaluations of each $F_D$ and $F_A$, where $\hat s$ becomes quite large for very stiff problems.} The advantage of our new scheme comes from having larger stability region that changes adaptively, which allows to use bigger time steps.

\paragraph{Construction} In order to construct a second order scheme, we need our stability function to satisfy the following equality
\begin{equation}\label{eq:o2p}
	\begin{split}
		R_2(p,q)&=1+p+iq+\frac12(p+iq)^2+\bigo(((p+iq)^3)\\&=1+p+iq+\frac{p^2}2+ipq-\frac{q^2}2+\bigo((p+iq)^3).
	\end{split}
\end{equation}
{A natural approach would be to modify the last stage of the first order scheme to reach second order, but such naive modification will cause severe instability. Our idea is to start the stabilization using second kind Chebyshev polynomials from the beginning of the integration process.} Hence, inspired by the previous section, we consider the following polynomial
\begin{equation}\label{eq:stabfun2}
	\begin{split}
		R_2(p,q) &= A_2(p)+B_2(p)(iq-\frac{q^2}2)\\
		& := a_s+b_sT_s(\omega_0+\omega_2 p) \\
		&+
		\left(\frac{\omega_2}2+\left(1-\frac{\omega_2}2\right)\frac{U_{s-1}(\omega_0+\omega_2 p)}{U_{s-1}(\omega_0)}\right)(1+ \frac{\omega_2}2 p) (iq-\frac{q^2}2),
	\end{split}
\end{equation}
where all the coefficients are defined in \eqref{eq:omegarkc}, from which we can derive the new adaptive second order ARKC method \eqref{eq:ARKC} for advection--diffusion--reaction problems, using the relations \eqref{eq:chebT}, \eqref{eq:chebU}, and \eqref{eq:TU}. {The first part of our stability function \eqref{eq:stabfun2} corresponds to the stability polynomial of the standard second order RKC scheme, while the second part involves second kind Chebyshev polynomials (as in \eqref{eq:stabfundaeq}) in order to stabilize the advection part, and has the correct order. The construction of the method from the polynomial \eqref{eq:stabfun2} is done by induction using the relations \eqref{eq:chebT}-\eqref{eq:TU}. See also Lemma \ref{lemma:stab} below.}

Unlike the case of standard ODEs where the term $z^2/2$ is enough to have order two for nonlinear problems, some additional \emph{coupling} conditions need to be satisfied here due to the partitioned nature of the scheme (see \cite[Sect.\ts III.2]{HLW06}). These conditions result from the fact that the term $ipq$ in \eqref{eq:o2p} is in fact the sum of two terms: $\frac12 ipq$ and $\frac12 iqp$, which are not necessarily equal for multidimensional and/or nonlinear problems.
{The quantity $G$ is constructed carefully to get order $2$ of convergence for general nonlinear problems. For linear equations, $G=(1+\frac{\omega_2}2)(iq-\frac{q^2}2)$.}
Analogously to some existing second order methods \cite{AbV12a,VSH04}, our damping parameter is not fixed to a small value, but it is an increasing function of the stage number $s$. However, for relatively large Peclet numbers, no huge damping is needed, and the method still perform very well as will be shown in Section \ref{sec:num}.

\subsection{Stability analysis}
\label{sec:stab}

{
	\begin{remark}
The stability analysis of Runge--Kutta type methods is usually made on linear test problems, but we emphasize that such test equations only give insight on the stability of the stabilized method under study (here ARKC). This is because practical problems are usually nonlinear, and even for linear problems, the involved operators cannot in general be diagonalized simultaneously. 
\end{remark}
}
\begin{lemma}\label{lemma:stab}
	The scheme \eqref{eq:ARKC} applied to the linear test problem \eqref{eq:adtest}  with step size $h$, produces the following recurrence
	\begin{equation*}
		y_{n+1}=R_s(h\lambda,h\mu)y_n,
	\end{equation*}
	where the stability polynomial $R_s(p,q)$ is defined in \eqref{eq:stabfun2}.
\end{lemma}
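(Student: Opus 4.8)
The plan is to specialize the scheme \eqref{eq:ARKC} to the scalar test equation \eqref{eq:adtest}, where $F_D(y)=\lambda y$ and $F_A(y)=i\mu y$, and to track the complex scalar carried along by the internal stages. Writing $p=h\lambda$, $q=h\mu$ and $\zeta:=iq-\tfrac{q^2}2$, every stage is a multiple of $y_0$, say $K_j=c_jy_0$; since $F_D$ and $F_A$ act linearly and the scheme combines the stages only affinely, each $c_j$ is an affine function of $\zeta$, say $c_j=P_j+Q_j\zeta$ with $P_j,Q_j$ depending on $p$ only. It then suffices to show $c_s=A_2(p)+B_2(p)\zeta$, which is the right-hand side of \eqref{eq:stabfun2}; iterating over the step index then gives $y_{n+1}=R_s(h\lambda,h\mu)y_n$.

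First I would evaluate the auxiliary quantity $G$. Unrolling the nested evaluations of $F_A$ and $F_D$ in its definition (each nesting level contributing one more factor of $p$ or of $iq$) is a direct computation that collapses to $G=\bigl(1+\tfrac{\omega_2}2p\bigr)\zeta\,y_0$; this cancellation is exactly the point of the careful design of $G$. It follows that $c_0=1+\tfrac{\omega_2}2\bigl(1+\tfrac{\omega_2}2p\bigr)\zeta$ and $c_1=1+b_1\omega_2p+\bigl(\tfrac{\omega_2}2+\alpha\bigr)\bigl(1+\tfrac{\omega_2}2p\bigr)\zeta$, and for $j\ge2$, inserting $K_j=c_jy_0$ into \eqref{eq:ARKC} and using $hF_D(v)=pv$ yields the scalar recursion $c_j=(\mu_jp+\nu_j)c_{j-1}+\kappa_jc_{j-2}+(1-\nu_j-\kappa_j-\mu_jp)c_0+\mu_jp(1-a_{j-1})$.

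Separating the $\zeta^0$ and $\zeta^1$ parts decouples the problem. The $P_j$ satisfy $P_j=\mu_jp(P_{j-1}-a_{j-1})+\nu_jP_{j-1}+\kappa_jP_{j-2}+(1-\nu_j-\kappa_j)$ with $P_0=1$ and $P_1=1+b_1\omega_2p$, which is precisely the stability recursion of the standard RKC scheme \eqref{eq:rkc}; hence, by the Chebyshev analysis recalled in Section~\ref{sec:esm}, $P_s=a_s+b_sT_s(\omega_0+\omega_2p)=A_2(p)$. The $Q_j$ satisfy $Q_j=(\mu_jp+\nu_j)Q_{j-1}+\kappa_jQ_{j-2}+(1-\nu_j-\kappa_j-\mu_jp)Q_0$; since $(\mu_jp+\nu_j)+\kappa_j+(1-\nu_j-\kappa_j-\mu_jp)=1$, the constant sequence equal to $Q_0$ is a particular solution, so $Q_j=Q_0+\check Q_j$ where $\check Q_j$ obeys the \emph{homogeneous} recursion $Z_j=(\mu_jp+\nu_j)Z_{j-1}+\kappa_jZ_{j-2}$ with $\check Q_0=0$ and $\check Q_1=\alpha\bigl(1+\tfrac{\omega_2}2p\bigr)$.

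The crux is that, because $T_j$ and $U_{j-1}$ satisfy the same three-term recurrence \eqref{eq:chebT}--\eqref{eq:chebU} and because of the form of the coefficients in \eqref{eq:omegarkc}--\eqref{eq:coeffsrkc}, the homogeneous recursion admits the two linearly independent solutions $j\mapsto b_jT_j(\omega_0+\omega_2p)$ and $j\mapsto b_jU_{j-1}(\omega_0+\omega_2p)$. Matching $\check Q_0=0$ forces the $T$-component to vanish (since $U_{-1}=0$ and $b_0\ne0$), and $\check Q_1$ fixes the $U$-component, giving $\check Q_s=\tfrac{\alpha b_s}{b_1}\bigl(1+\tfrac{\omega_2}2p\bigr)U_{s-1}(\omega_0+\omega_2p)$. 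Finally, the identity $s\omega_2b_s=1/U_{s-1}(\omega_0)$ — which follows from $\omega_2=T_s'(\omega_0)/T_s''(\omega_0)$, $b_s=T_s''(\omega_0)/T_s'(\omega_0)^2$ and $T_s'(\omega_0)=sU_{s-1}(\omega_0)$, the last being \eqref{eq:TU} — together with $\alpha=\bigl(1-\tfrac{\omega_2}2\bigr)b_1s\omega_2$ turns $Q_s=Q_0+\check Q_s$ into $B_2(p)=\bigl(\tfrac{\omega_2}2+(1-\tfrac{\omega_2}2)\tfrac{U_{s-1}(\omega_0+\omega_2p)}{U_{s-1}(\omega_0)}\bigr)\bigl(1+\tfrac{\omega_2}2p\bigr)$, so that $c_s=A_2(p)+B_2(p)\zeta=R_s(p,q)$. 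I expect the main obstacle to be this last step: diagonalising the homogeneous recursion through the two Chebyshev families and then pinning the constants down from the initial data, in particular checking that the chosen value of $\alpha$ and relation \eqref{eq:TU} are exactly what makes $Q_s$ equal $B_2(p)$; the evaluation of $G$ and the identification of the $P_j$-part with standard RKC are routine.
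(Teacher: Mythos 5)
Your proof is correct and follows essentially the same route as the paper: both track the internal stages on the linear test problem, use that $T_j$ and $U_{j-1}$ satisfy the same three-term recurrence so that each stage is affine in $iq-\tfrac{q^2}{2}$ with Chebyshev-polynomial coefficients, and close the argument with $\alpha=\bigl(1-\tfrac{\omega_2}{2}\bigr)b_1s\omega_2$ and the identity $s\omega_2 b_s=1/U_{s-1}(\omega_0)$ coming from \eqref{eq:TU}. The only differences are presentational — you derive the stage formula by splitting off the constant particular solution and expanding the homogeneous part in the basis $b_jT_j$, $b_jU_{j-1}$, whereas the paper verifies the closed form directly by induction on $j$ — and your intermediate expression is the consistent one (in the paper's displayed $R_j$ the factor $b_js\omega_2$ should multiply only the $U_{j-1}$ term, not also $\tfrac{\omega_2}{2}$, as one checks at $j=1$ and $j=s$).
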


\begin{proof}
	By induction on $j$, it can be shown that, for every $j\geq1$, the internal stages satisfy 
	
	$$K_j=R_j(p,q)y_n,$$
	where,
	\begin{equation*}
		\begin{split}
			R_j(p,q)&\coloneqq a_j+b_jT_j(\omega_0+\omega_2 p) \\
			&+\left(\frac{\omega_2}2+\left(1-\frac{\omega_2}2\right)U_{j-1}(\omega_0+\omega_2 p)\right)b_js\omega_2(1+ \frac{\omega_2}2 p) (iq-\frac{q^2}2).
		\end{split}
	\end{equation*}
	For $j=s$, we have that $b_ss\omega_2=1/U_{s-1}(\omega_{0})$ because $s^{-1}T_s'(\omega_{0})=U_{s-1}(\omega_{0})$ and the proof is done. \qed	
\end{proof}

In contrast to the first order scheme \eqref{eq:adc} and the stochastic integrator SK-ROCK \cite{AAV18}, the damping parameter for the ARKC method is not fixed. Indeed, it is an increasing function of the Peclet number and the number of stages $s$. We provide numerical stability analysis that illustrates the nice features of the new scheme (see Figures \ref{fig:stabreg} and \ref{fig:stabpol}).

\begin{figure}[tb]
	\centering
	\begin{subfigure}[t]{0.45\textwidth}
		\includegraphics[width=1\linewidth]{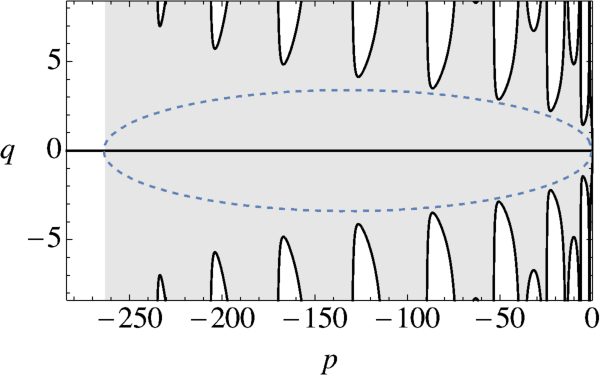}
		\caption{$\eta=0.15$, $\delta_s=0.65s^2$, $\alpha_s=0.17s$.}
					\label{fig:eta015}
	\end{subfigure}
	\begin{subfigure}[t]{0.45\textwidth}
		\includegraphics[width=1\linewidth]{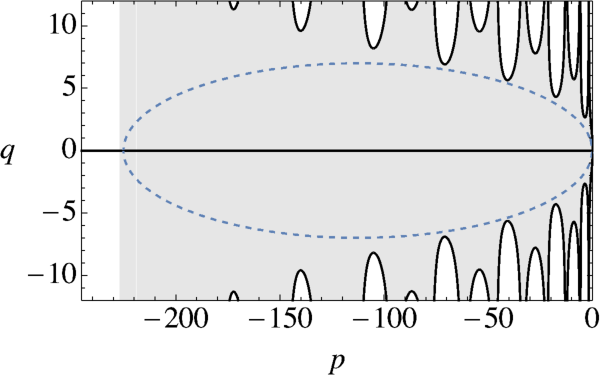}
		\caption{$\eta=1.5$, $\delta_s=0.56s^2$, $\alpha_s=0.35s$.}
	\end{subfigure}\\
	\begin{subfigure}[t]{0.45\textwidth}
		\includegraphics[width=1\linewidth]{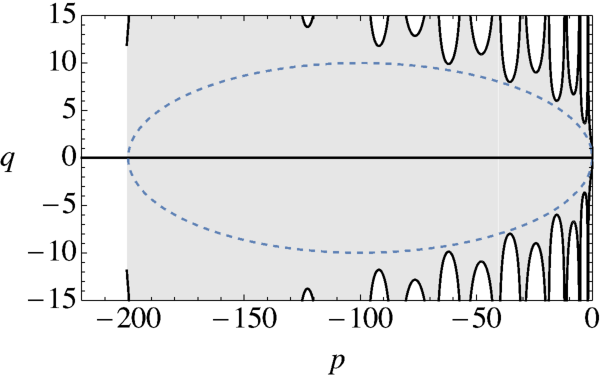}
		\caption{$\eta=3$, $\delta_s=0.5s^2$, $\alpha_s=0.5s$.}
					\label{fig:eta3}
	\end{subfigure}
	\begin{subfigure}[t]{0.45\textwidth}
		\includegraphics[width=1\linewidth]{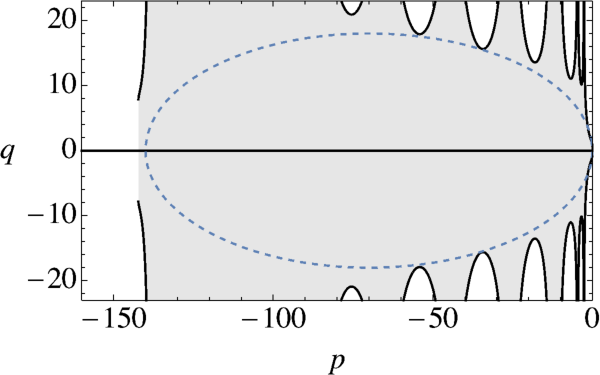}
		\caption{$\eta=10$, $\delta_s=0.35s^2$, $\alpha_s=0.9s$.}
					\label{fig:eta10}
	\end{subfigure}
	
	\caption{Illustration of the stability regions of the ARKC method \eqref{eq:ARKC} in the $p-q$ plane for $s=20$ and different values of the damping parameter $\eta$. {For each stability region, we plot the largest possible ellipse that fits inside.}}
	\label{fig:stabreg}
\end{figure}
In what follows, we will respectively denote by {$\delta_s$ and $\alpha_s$} the half width and the half height of the largest ellipse that can be put in the stability region of the corresponding method.

The method \eqref{eq:ARKC} is designed to handle quite large Peclet numbers, this means that we need the width of the stability region in the imaginary direction to be as large as possible. While the length over the negative real axis grows quadratically with $s$, we cannot achieve more than linear growth on the imaginary direction (this fact is given as an exercise in \cite[Chap.\ts IV]{HaW96}). In \cite{VSH04}, the RKC method is proposed with large damping value which reduces the length of the stability domain over the negative real axis to $\delta_s=0.34s^2$, and leads to a growth of only $\bigo(\sqrt{s})$ for the ellipse half-height $\alpha_s$. The PRKC method proposed in \cite{Zb11} uses the standard small damping $\eta=0.15$ that keeps $\delta_s\approx0.65s^2$, but $\alpha_s$ is fixed to $1.7$ no matter how large is the number of stages $s$, which does not add much to the standard RKC method since this might be useful only in the small Peclet number regime. However, the main feature of the method is to reduce the number of evaluations of possible non stiff terms such as non-stiff advection or reaction terms. The PIROCK integrator  \cite{AbV12a}, is notably better than the two mentioned methods RKC and PRKC. Two kinds of damping are proposed for PIROCK, the first lead to a nearly optimal length over the negative real axis $\delta_s\approx0.81s^2$ but $\alpha_s$ is limited to $ 0.07696s + 1.878$. For the second damping, $\alpha_s=0.5321s + 0.4996$ but $\delta_s$ reduces to $0.43s^2$. It shares as well the feature of reducing the evaluation of non diffusive terms. Nevertheless, PIROCK still have two drawbacks: the first one is the non-availability of explicit closed form formulas to compute its coefficients for a given number of stages $s$, the other is the limited choice of damping.

The stability region of the ARKC method changes adaptively with the spectrum of the Jacobian of the vector fields at each time step. This change is due to the damping parameter $\eta$ which is not necessarily constant. In fact, the ellipse half width $\delta_s$ still grows quadratically with $s$, while the ellipse half height $\alpha_s$ grows linearly with $s$, in contrast to RKC and PRKC methods. For example, for the standard value of $\eta=2/13$ used for RKC in \cite{SSV98} and PRKC in \cite{Zb11}, $\delta_s$ is still equal to $0.65s^2$ and $\alpha_s\simeq0.17s$ (see Figure \ref{fig:eta015}), whereas for PRKC $\alpha_s$ is fixed to $1.7$ .  In addition, increasing the value of the damping parameter $\eta$ adds more space in the imaginary direction which is very favorable for the advection dominated problems. Figure \ref{fig:eta3} illustrates the stability region in the p-q plane for $s=20$ and $\eta=3$. We can see that $\delta_s=0.5s^2$ and $\alpha_s=0.5s$, while the best that PIROCK \cite{AbV12a} could achieve for almost the same $\alpha_s$ is $\delta_s=0.43s^2$ which makes difference for large values of $s$. For $\eta=10$ we have $\delta_s=0.35s^2$ and $\alpha_s=0.9s$ as shown in Figure \ref{fig:eta10}, compare that with the case of standard RKC with infinite damping, considered in \cite{VSH04}, where $\delta_s=0.34s^2$ while $\alpha_s=\bigo(\sqrt{s})$. {The last method to compare with is IMPRKC introduced in \cite{TX20} where the authors provide a plot for $s=50$ stages and $\hat s=3$ additional stages. The length over the negative real axis stays almost the same as RKC at $0.65s^2$, while the width along the imaginary axis is around $0.8s$. As said before, the evaluation of the advection--reaction term at every step of IMPRKC is a disadvantage that limits its performance. For instance, for small Peclet numbers it needs time steps as much as the standard RKC scheme which makes it a bit more expensive because of the additional stages needed. In Section \ref{sec:num}, our scheme ARKC is shown to perform better in all regimes.}

\subsection{Choice of damping} \label{sec:cod}
In fact, $\alpha_s$ is a function of $s$ and $\eta$, this introduces additional difficulty in the estimation of the value of $\alpha_s$. Therefore, we will introduce many choices of the range of Peclet number to simplify the implementation.  Let $\rho_D$ and $\rho_A$ be the spectral radii of the Jacobians of $F_D$ and $F_A$ respectively.
{For ODEs coming from the discretization of the linear PDE \eqref{eq:ADlin}, we have $\rho_D = 4d/\Delta x^2$ and $\rho_A= a/\Delta x$, then
$$
\frac{\rho_A}{\sqrt{\rho_D}}=\frac{a}{2\sqrt{d}}.
$$
We define the number $$P_e'\coloneqq\frac{a}{\sqrt{d}},$$
hence, $\rho_A/\sqrt{\rho_D}=\frac12P_e'$. Similarly, we can show that $q=\frac12P_e'\sqrt{-hp}$. We will use this new number $P_e'$ to adapt our choice of damping according to the parameters of the problem. Notice that this is just another measure of the advection dominance that is proportional to the Peclet number and more convenient to use in our algorithm.}

 {For  $P_e'\leq1/10$, i.e, $\rho_A/\sqrt{\rho_D}\leq1/20$, we fix  $\eta=0.15$ up to $s=200$, and for $s$ between $200$ and $500$ we set $\eta=0.6$ (we do not allow $s$ to be more than $500$). 
For $1/10<P_e'\leq1/2$ which means that $1/20<\rho_A/\sqrt{\rho_D}\leq1/4$,} we consider the following choices for $s$ and $\eta$:
\begin{center}
	
	\begin{tabular}{ |c|c|c|c|c|c|c|c|c|c|c|c|} 
		\hline
		$2\leq s\leq30$& $31\leq s\leq60$ & $61\leq s\leq110$ & $111\leq s\leq 160$\\
		\hline
		$\eta= 0.2$ & $ \eta=0.45$ &$\eta=1$&$\eta=1.5$\\
		\hline
	\end{tabular}
	\begin{tabular}{ |c|c|c|c|c|c|c|c|c|c|c|c|} 
	\hline
	$161\leq s\leq260$ & $261\leq s\leq360$ & $361\leq s\leq500$\\
	\hline
	$\eta=2.4$&$\eta=3$&$\eta=4$\\
	\hline
\end{tabular}
	\captionof{table}{$1/10<P_e'\leq1/2,\,1/20<\rho_A/\sqrt{\rho_D}\leq1/4$.}
	\label{table0p5}
\end{center}
Tables for other values of $P_e'$ are given in appendix \ref{app}. Infinitely many choices could be done, but we will present and use only a few choices since they are enough for the method to perform very well. The methodology to compute the above values is easy, it is enough to plot $R_s(p,q)$ for the corresponding choice of $q$ and to vary $\eta$ in a way that we stay stable for the given value of $s$ (see Figure \ref{fig:stabpol}).

\begin{figure}[tb]
	\centering
	\begin{subfigure}[t]{0.32\textwidth}
		\includegraphics[width=1\linewidth]{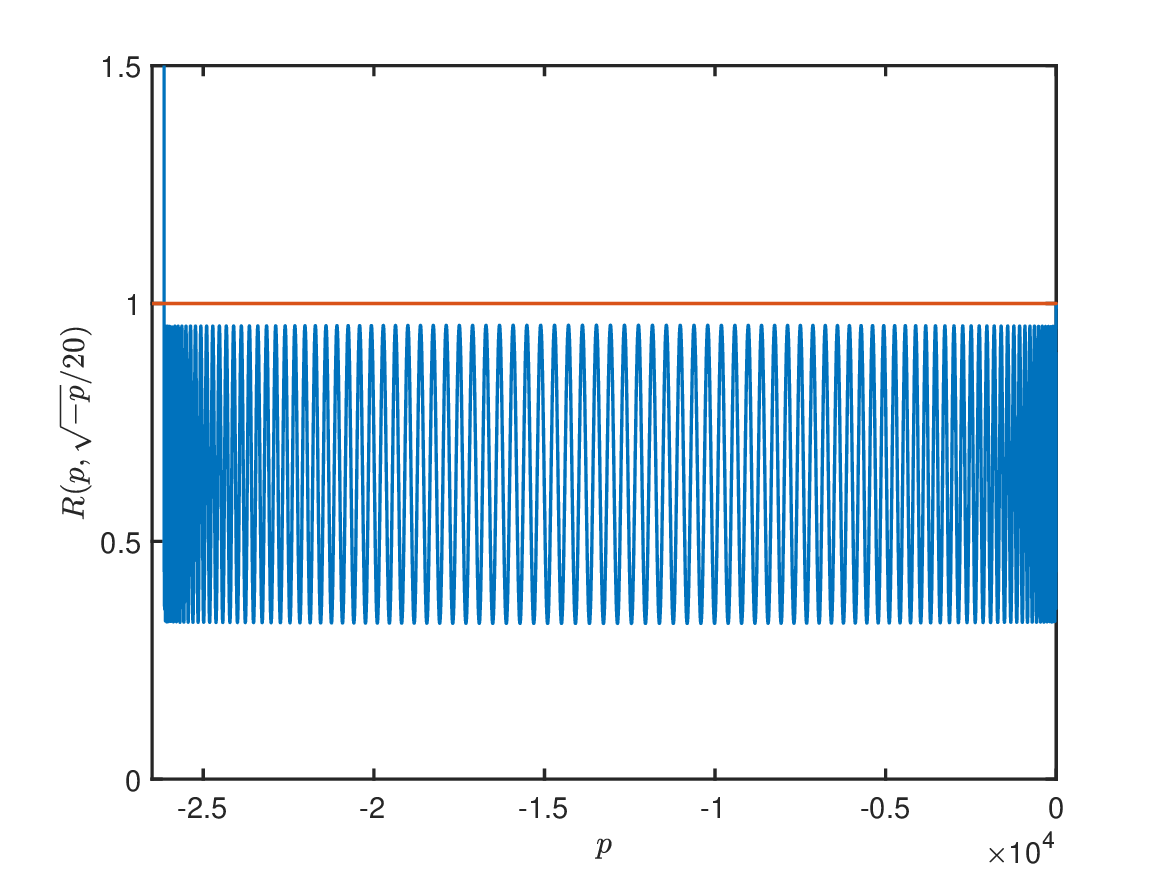}
		\caption{$\eta=0.15$, $\delta_s=0.65s^2$.}
	\end{subfigure}
	\begin{subfigure}[t]{0.32\textwidth}
		\includegraphics[width=1\linewidth]{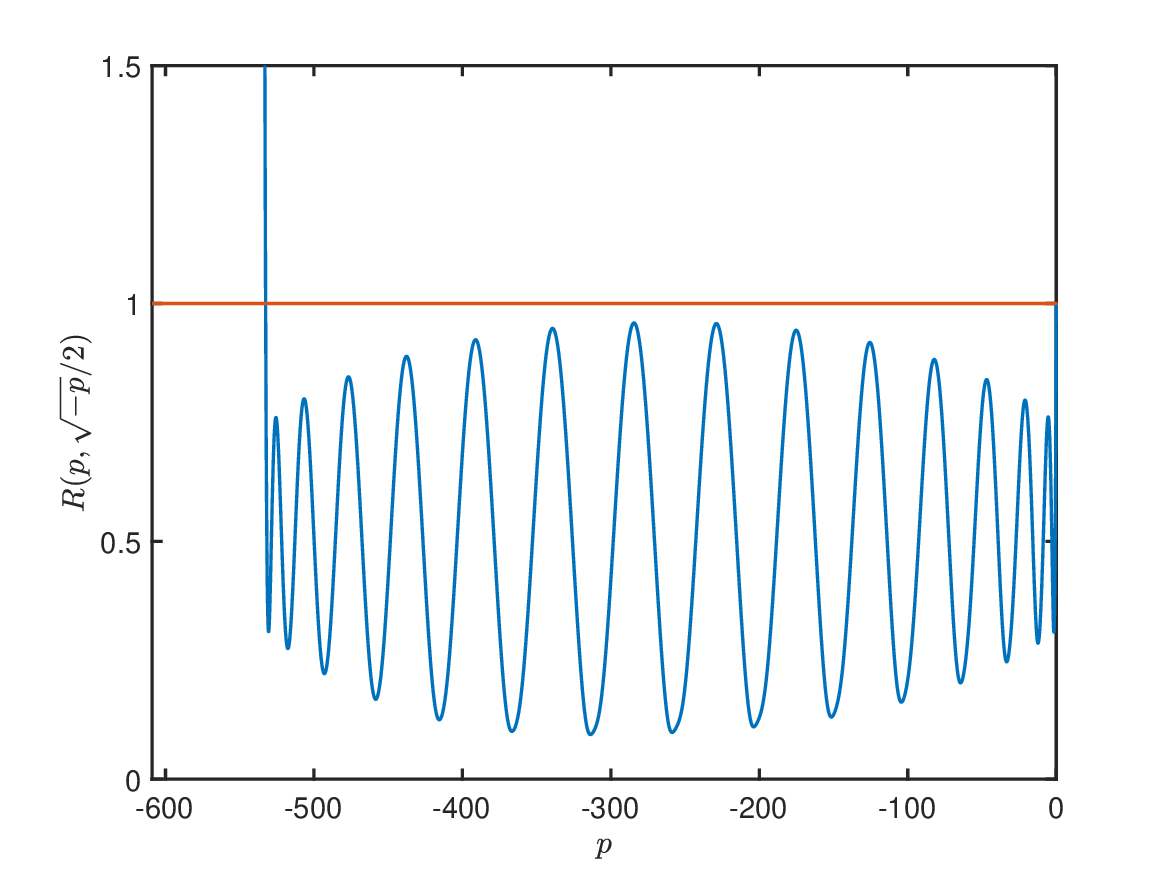}
		\caption{$\eta=1$, $\delta_s=0.59s^2$.}
	\end{subfigure}
	\begin{subfigure}[t]{0.32\textwidth}
		\includegraphics[width=1\linewidth]{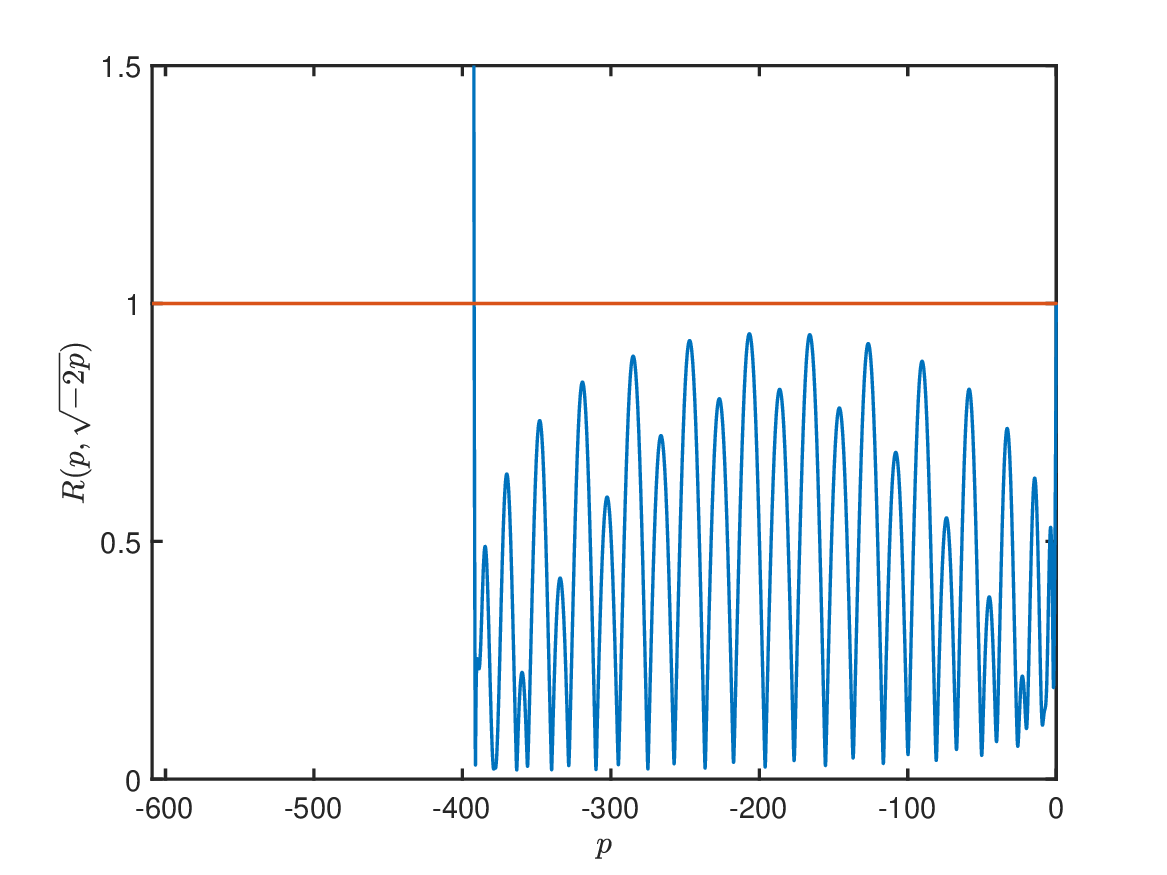}
		\caption{$\eta=5$, $\delta_s=0.4345s^2$.}
	\end{subfigure}
	\caption{Stability polynomial \eqref{eq:stabfun2} for $s=200$ (left) and $s=30$ (middle and right), and different values of $\eta$ and $q$.}
	\label{fig:stabpol}
\end{figure}

\subsection{Convergence analysis}
In this section we will prove the second order of convergence of the scheme \eqref{eq:ARKC} when applied to ODEs of the form \eqref{eq:odead} arising from the discretization of advection--diffusion--reaction problems.
\begin{theorem}
	Let $T>0$ and consider the system of ODEs \eqref{eq:odead}  on the time interval $[0,T]$, where $F_D$ and $F_A$ are of class $\mathcal{C}^2$ and are Lipschitz continuous. Suppose in addition that the first and second derivatives of $F_D$ and $F_A$ are bounded. Let $N\in\IN$, $h=T/N$, and $t_n=nh,~n=1,\dots,N$, {and consider the method \eqref{eq:ARKC} applied to \eqref{eq:odead} with step size $h$, such that the number of stages $s$ and the damping parameter $\eta$ are chosen appropriately to guarantee stability.} Then, we have for all $n=1\dots,N$,
	\begin{equation}\label{eq:global}
		\|y(t_n)-y_n\| \leq Ch^2,
	\end{equation}
	where $C$ is independent of $h$ and $n$. {In other words, the method converges with order 2.}
\end{theorem}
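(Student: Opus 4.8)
The plan is to follow the classical strategy for proving second-order convergence of a one-step method: establish a local error bound of order $h^3$ together with Lipschitz continuity (zero-stability) of the increment map, and then invoke the standard Lady Windermere's fan argument to obtain the global bound \eqref{eq:global}. The subtlety, as the authors have already flagged in the construction paragraph, is that the scheme is \emph{partitioned}, so it is not enough that the stability polynomial $R_2(p,q)$ agrees with $\exp(p+iq)$ up to $\bigo((p+iq)^3)$; one must also check the nonlinear coupling (P-series) order conditions up to order two.

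First I would set up the local error. Fix $t_n$, let $\tilde y(t)$ be the exact solution of \eqref{eq:odead} with $\tilde y(t_n)=y(t_n)$, and let $\tilde y_{n+1}$ be one step of \eqref{eq:ARKC} started from $y(t_n)$. I would Taylor-expand both $\tilde y(t_{n}+h)$ and $\tilde y_{n+1}$ in powers of $h$ around the exact value, using the $\mathcal{C}^2$ regularity and the boundedness of the first and second derivatives of $F_D$ and $F_A$ to control the remainders. The exact solution expansion gives $\tilde y(t_n+h)=y_0 + h F(y_0) + \tfrac{h^2}{2}F'(y_0)F(y_0) + \bigo(h^3)$ with $F=F_D+F_A$. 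For the numerical step I would expand the auxiliary quantity $G$ first: a direct computation shows $G = h F_A(y_0) + \bigo(h^2)$ at leading order, and carrying the expansion one term further produces the mixed derivative terms $F_A'F_D$ and $F_A'F_A$ with the coefficients dictated by the inner stages of $G$. Then I would expand $K_0, K_1$ and, by the same induction already used in the stability lemma (but now on the full nonlinear scheme), the recurrence for $K_j$, tracking terms up to $h^2$. Matching the $h^0, h^1$ and $h^2$ coefficients of $\tilde y_{n+1}$ against those of $\tilde y(t_n+h)$ is exactly where the linear order conditions (encoded in $R_2(p,q)=1+p+iq+\tfrac12(p+iq)^2+\bigo(\cdot^3)$) and the partitioned coupling conditions enter: the coefficient of $\tfrac{h^2}{2}F_A'F_D + \tfrac{h^2}{2}F_D'F_A$ must come out equal to the coefficient of $h^2 \cdot$ (mixed term), and this is precisely what the careful construction of $G$ (with the half-steps $\tfrac h2 F_A(\cdot)$, $\tfrac h2 F_D(\cdot)$ and the shift $\tfrac{\omega_2-1}{2}$) is designed to achieve, while the choice $\alpha=(1-\tfrac{\omega_2}{2})b_1 s\omega_2$ fixes the remaining first-order consistency of the $G$-contribution inside $K_1$. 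The conclusion of this step is $\|\tilde y_{n+1}-\tilde y(t_n+h)\|\le C_1 h^3$ with $C_1$ depending only on $T$ and the stated bounds.

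Next I would establish zero-stability: if $\Phi_h$ denotes the one-step map of \eqref{eq:ARKC}, then $\|\Phi_h(u)-\Phi_h(v)\|\le (1+C_2 h)\|u-v\|$ for all $u,v$ and all $h$ small enough, with $C_2$ depending on the Lipschitz constants of $F_D,F_A$. This follows by the same kind of induction on the stages: each $K_j$ is an affine-in-$h$ combination of evaluations of $F_D$ at previous stages plus $F_D(y_0), F_D(K_0)$, and the coefficients $\mu_j,\nu_j,\kappa_j$ are bounded uniformly in $j$ and $s$ for the chosen damping (this boundedness is the standard property of RKC coefficients, used implicitly throughout the paper); the $G$-term contributes a Lipschitz perturbation of size $\bigo(h)$. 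Here one has to be slightly careful that the bound is uniform in $s$, but since we only need it for the fixed sequence of $(s,\eta)$ actually selected by the algorithm for a given problem and the coefficients stay bounded, this is routine.

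Finally, the global estimate. Writing $e_n = y(t_n)-y_n$ and inserting $\tilde y_{n+1}$,
\begin{equation*}
	\|e_{n+1}\| \le \|\Phi_h(y(t_n))-\Phi_h(y_n)\| + \|\tilde y_{n+1}-\tilde y(t_n+h)\| \le (1+C_2 h)\|e_n\| + C_1 h^3,
\end{equation*}
and the discrete Gr\"onwall lemma together with $e_0=0$ and $Nh=T$ gives $\|e_n\|\le C_1 h^3 \cdot \tfrac{e^{C_2 T}-1}{C_2 h} \le C h^2$, which is \eqref{eq:global}. I expect the main obstacle to be the local-error step, specifically the bookkeeping in the Taylor expansion of $G$ and of the stage recurrence to second order: one must verify that all four second-order elementary differentials ($F_D'F_D$, $F_D'F_A$, $F_A'F_D$, $F_A'F_A$) appear with exactly the coefficient $\tfrac12$, and this is where the non-commutativity of $F_D'F_A$ and $F_A'F_D$ for nonlinear multidimensional problems is genuinely used — the linear stability analysis alone cannot see the difference. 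Everything after that is standard one-step convergence theory.
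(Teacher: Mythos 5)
Your overall strategy coincides with the paper's: Taylor-expand $G$ and the internal stages, match the elementary differentials $F_D'F_D$, $F_D'F_A$, $F_A'F_D$, $F_A'F_A$ up to $h^2$ to obtain a local error of size $\bigo(h^3)$, and then pass to the global bound \eqref{eq:global} by standard one-step theory (the paper simply invokes \cite[Theorem 3.6]{HNW93}, which packages your zero-stability plus discrete Gr\"onwall argument; that part of your proposal is fine).

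The genuine gap is in the local-error step, which you yourself call ``the main obstacle'' but then settle by assertion: you claim the mixed and pure second-order coefficients ``come out equal to $\tfrac12$'' because the construction of $G$ and the choice of $\alpha$ ``is designed to achieve'' this. That is circular --- second-order consistency of the full $s$-stage recursion with the specific coefficients \eqref{eq:omegarkc}--\eqref{eq:coeffsrkc} is exactly what must be proved, and it is not a one-line consequence of the design. In the paper this verification is the bulk of the proof: one writes $K_j=y_0+\gamma_1^jhF_D+\gamma_2^jhF_A+\gamma_3^jh^2F_D'F_D+\gamma_4^jh^2F_D'F_A+\gamma_5^jh^2F_A'F_D+\gamma_6^jh^2F_A'F_A+\bigo(h^3)$, derives three-term recurrences for the $\gamma_i^j$ from the stage recursion in \eqref{eq:ARKC}, and shows $\gamma_1^s=\gamma_2^s=1$ and $\gamma_3^s=\gamma_4^s=\gamma_5^s=\gamma_6^s=\tfrac12$ by identifying these recurrences with Chebyshev-type recurrences: $\gamma_1^j$ are the RKC nodes $c_j=\omega_2T_j''(\omega_0)/T_j'(\omega_0)$, $\gamma_2^j=\tfrac{\omega_2}2+\alpha\tfrac{b_j}{b_1}U_{j-1}(\omega_0)$ (this is where $\alpha=(1-\tfrac{\omega_2}2)b_1s\omega_2$ and the identity $sU_{s-1}(\omega_0)=T_s'(\omega_0)$ are actually used, via $\omega_2 b_sT_s'(\omega_0)=1$), and $\gamma_3^j=2\omega_2^2b_jQ_j(\omega_0)$ with $Q_j=T_j''/4$, with analogous arguments for $\gamma_4^j,\gamma_5^j$. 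Without carrying out this identification (or some equivalent check of the partitioned order-two conditions for these particular coefficients), your argument only shows that \emph{if} the coefficients telescope to the right values then the method is second order; the telescoping itself --- the mathematical core of the theorem --- is missing. A secondary caveat, which you correctly flag and which the paper also treats lightly, is uniformity in $s$ of the local-error and Lipschitz constants; but the decisive missing piece is the explicit propagation-of-coefficients computation through the recursion.
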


\begin{proof}
	Let us prove first that the local error (the error after one step) satisfies 		
	\begin{equation}\label{eq:local}
		\|y(h)-y_1\| =\bigo(h^3).
	\end{equation}
	Throughout the proof, $F_i(y_0)$ will be simply denoted by $F_i$, and $F_i'(y_0)F_j(y_0)$ will be denoted by $F_i'F_j$ with $i,j\in\{D,A\}$.
	
	Using Taylor expansion, we can easily see that 
	$$
	G=hF_A+\frac{h^2}2F_A'F_A+\frac{h^2}2F_A'F_D+(\omega_2-1)\frac{h^2}2F_D'F_A+\bigo(h^3).
	$$
	Now, let us suppose that
	\begin{equation}\label{eq:stage}
		K_j=y_0+\gamma_1^jhF_D+\gamma_2^jhF_A+\gamma_3^jh^2F_D'F_D+\gamma_4^jh^2F_D'F_A+\gamma_5^jh^2F_A'F_D+\gamma_6^jh^2F_A'F_A+\bigo(h^3).
	\end{equation}
	The first two stages of the method satisfy:
	\begin{equation*}
		K_0=y_0+\frac{\omega_2}2G,\quad K_1=y_0+b_1\omega_2hF_D(y_0)+(\alpha+\frac{\omega_2}2)G,
	\end{equation*} 
	hence, we have,  $\gamma_1^0=0$, $\gamma_1^1=b_1\omega_2$, $\gamma_2^0=\frac{\omega_2}2$, $\gamma_2^1=\alpha+\frac{\omega_2}2$,
	$\gamma_3^0=\gamma_3^1=0$,
	$\gamma_4^0=\frac{\omega_2(\omega_2-1)}4$, $\gamma_4^1=(\alpha+\frac{\omega_2}2)\frac{\omega_2-1}2$,
	$\gamma_5^0=\frac{\omega_2}4$, $\gamma_5^1=\frac12(\alpha+\frac{\omega_2}2)$,
	$\gamma_6^0=\frac{\omega_2}4$, and finally, $\gamma_6^1=\frac12(\alpha+\frac{\omega_2}2)$. By performing a Taylor expansion of the stages $K_j$ defined in \eqref{eq:ARKC} and replacing $K_{j-1}$ and $K_{j-2}$ by the expansion defined in\eqref{eq:stage}, and finally identifying the coefficients, we get the following relations
	\begin{equation*}
		\begin{split}
			\gamma_1^j&=\mu_j(1-a_{j-1})+\nu_j\gamma_1^{j-1}+\kappa_j\gamma_1^{j-2},\\
			\gamma_2^j&=\nu_j\gamma_2^{j-1}+\kappa_j\gamma_2^{j-2}+(1-\nu_j-\kappa_j)\frac{\omega_2}2,\\
			\gamma_3^j&=\mu_j\gamma_1^{j-1}+\nu_j\gamma_3^{j-1}+\kappa_j\gamma_3^{j-2},\\
			\gamma_4^j&=\mu_j(\gamma_2^{j-1}-\frac{\omega_2}2)+\nu_j\gamma_4^{j-1}+\kappa_j\gamma_4^{j-2}+(1-\nu_j-\kappa_j)\frac{\omega_2(\omega_2-1)}4,\\
			\gamma_5^j&=\nu_j\gamma_5^{j-1}+\kappa_j\gamma_5^{j-2}+(1-\nu_j-\kappa_j)\frac{\omega_2}4,\\
			\gamma_6^j&=\nu_j\gamma_6^{j-1}+\kappa_j\gamma_6^{j-2}+(1-\nu_j-\kappa_j)\frac{\omega_2}4.
		\end{split}
	\end{equation*}
	In order to prove \eqref{eq:local}, it is sufficient to show that $\gamma_1^s=\gamma_2^s=1$ and $\gamma_3^s=\gamma_4^s=\gamma_5^s=\frac12$. Obviously, $\gamma_6^j=\gamma_5^j$ for all $j\geq0$. We will provide proofs for the first 3 coefficients, the other two can be done using similar arguments.
	\begin{itemize}
		\item For $\gamma_1^j$ we have: $\gamma_1^0=0$, $\gamma_1^1=b_1\omega_2$, and $\gamma_1^j=\mu_j(1-a_{j-1})+\nu_j\gamma_1^{j-1}+\kappa_j\gamma_1^{j-2}$, thus $\gamma_1^j$ are the internal stages of the RKC method $\eqref{eq:rkc}$ applied to the problem
		$
		\dot y=1,~ y(0)=0
		$
		with step size $h=1$. This means that for all $0\leq j\leq s$, $\gamma_1^j=c_j$, where $c_j$ is the $j^{th}$ node of the RKC method. Therefore, $\gamma_1^s=c_s=1$. Moreover, according to \cite[Sect.\ts 2]{SSV98}, for all $j=2\dots,s$ we have 
		$$
		\gamma_1^j=c_j=\omega_2\frac{T_j''(\omega_0)}{T_j'(\omega_0)},
		\qquad c_1=\frac{c_2}{T_2'(\omega_0)}=b_1\omega_2,\quad c_0=0.
		$$
		
		\item It can be proved that $\gamma_2^j,~j=2,\dots,s$ are given by
		$
		\gamma_2^j=\frac{\omega_2}2+\alpha\frac{b_j}{b_1}P_j(\omega_0),
		$
		where $P_j(x)$ are polynomials that satisfy the following two term recurrence relation
		$$
		P_0(x)=0,\quad P_1(x)=1, \quad P_j(x)=2xP_{j-1}(x)-P_{j-2}(x),~j\geq 2.
		$$
		Comparing with the relation \eqref{eq:chebU}, it can be easily seen that for all $j\geq 1$, 
		$$P_j(x)=U_{j-1}(x)=T_j'(x)/j.$$ Hence, $\gamma_2^s=\frac{\omega_2}2+\alpha\frac{b_s}{sb_1}T_s'(\omega_0)=\frac{\omega_2}2+(1-\frac{\omega_2}2)\omega_2b_sT_s'(\omega_0)=1.$
		
		\item The proof for $\gamma_3^j$ is very similar to that of $\gamma_2^j$. Indeed, we can prove for each  $j=0,\dots,s$ the equality $\gamma_3^j=2\omega_2^2b_jQ_j(\omega_{0})$, where 
		$$
		Q_0(x)=0,\quad Q_1(x)=0, \quad Q_j(x)=T_{j-1}'+2xQ_{j-1}(x)-Q_{j-2}(x),~j\geq 2.
		$$
		Using the relation \eqref{eq:chebT}, we observe that for all $j \geq 0$, $Q_j(x)=\frac{T_j''(x)}4$, which implies that $\gamma_2^s=2\omega_2^2b_sT_s''(\omega_0)/4=1.$
	\end{itemize}
	Thus, \eqref{eq:local} is proved, and using regularity assumptions made on the vector fields, \cite[Theorem\ts 3.6]{HNW93} implies the global convergence estimate \eqref{eq:global}. \qed
		
\end{proof}

\subsection{Variable step size control and the fully adaptive algorithm}
We introduce the following local error estimator that allows us to adaptively select the time step size in order to reach a given accuracy,
\begin{equation*}
	Est_{n+1}=C(12(y_n-y_{n+1})+6h(F_D(y_n)+F_A(y_n)+F_D(y_{n+1})+F_A(y_{n+1}))),
\end{equation*}
where $C=1/6-c_2+(1/2-c_1)\zeta-1/6\zeta$, with $\zeta=0$ if $F_A \equiv 0$ and $\zeta=1$ otherwise, and 
$$
c_1=\frac{\omega_2}{2}\left(1-\frac{\omega_2}{2}\right)  \left(1+\omega_2\frac{U''_{s-1}(\omega_0)}{U_{s-1}(\omega_0)}\right),\quad c_2=sb_sU''_{s-1}(\omega_0)\frac{\omega_2^3}{6}.
$$
To get an intuition about the above coefficients, compare the third order term in the exact polynomial which is $(p+iq)^3/6=p^3/6-iq^3/6+iqp^2/2-pq^2/2$ and the third order term of stability polynomial \eqref{eq:stabfundaeq} that is equal to $c_2p^3+c_1iqp^2-pq^2/2$. Note that the above estimator is inspired by the one considered for RKC in the paper \cite{SSV98}, and they coincide for $\zeta=0$. In contrast to the estimators introduced for PRKC \cite{Zb11} and PIROCK \cite{AbV12a}, we do not consider two separate estimators for $F_D$ and $F_A$, since our method is defined in a different way. Indeed, for improved stabilization, the advection-reaction terms are computed at the beginning and not separately at the end, which makes the method more similar to RKC.
We adopt the standard step size selection strategy proposed in \cite{SSV98} for RKC (see also \cite[page 167]{HNW93}).
Now, we are ready to present our fully adaptive algorithm,
\begin{algo}[$y_0 \mapsto y_1$]\label{algo}
	Given a time step size $h$ {and an initial value $y_0$}:
	\begin{itemize}
		\item {Calculate $\rho_A$ and $\rho_D$ at the current value of the solution.}
		\item Calculate $\rho_A/\sqrt{\rho_D}$ and choose the corresponding table among Tables \ref{table0p5}, \ref{table1}, \ref{table1p5}, \ref{table2}, \ref{table2s2},~\ref{tablelarge}.
		\item Search the minimum $s$ (and the corresponding $\eta$) in the chosen table such that $\frac{1+\omega_0}{\omega_1}>h\rho_D$.
		\item Generate the coefficients \eqref{eq:omegarkc} and \eqref{eq:coeffsrkc} and apply the recurrence \eqref{eq:ARKC} to calculate $y_1$.
		\item Update the step size according to the automatic step size selection procedure and repeat until reaching the final time.
	\end{itemize}
\end{algo}

{
\begin{remark}
	To ensure stability, it is enough to choose $s$ such that $\frac{1+\omega_0}{\omega_1}>h\rho_D$, because the relation between $\eta$ and the corresponding range for $s$ in each table is built to ensure that, once $-h\rho_D$ lies inside the stability domain, the whole ellipse containing the eigenvalues for the given spectral radii fits inside.
\end{remark}
}

{
\begin{remark}
	In the case where the eigenvalues have nonzero imaginary part and very small real part, this is close to purely advective regime, which is out of the scope of the paper. However, this case can be treated in two different ways, either by using the adaptive algorithm without any modification and then the error estimator will choose a very small time step, or by modifying the algorithm to integrate such systems (extremely large Peclet number) with RK3 or RK4 (for which the stability domain includes a part of the imaginary axis).
\end{remark}
	}

{For the calculation of the spectral radii $\rho_D$ and $\rho_A$, we use the Matlab function "eig". In other programming languages, one can use nonlinear power method as in \cite{Abd02b} for example. However, the cost of such methods is a different issue and is beyond the scope of the present paper.} The code of the ARKC integrator as well as the drivers that reproduce the numerical experiments will be made publicly available on the page: \href{https://sites.google.com/view/ibrahim-almuslimani}{https://sites.google.com/view/ ibrahim-almuslimani}.

\section{Numerical experiments}
\label{sec:num}
We will compare ARKC with {IMPRKC}, PIROCK, and PRKC as they were shown to outperform the other stabilized methods for advection--diffusion--reaction problems where the reaction term is not stiff.

\subsection{Linear 1D advection--diffusion problem}
An excellent example to compare the performance of ARKC with {IMPRKC}, PIROCK, and PRKC is  the following 1-dimensional advection--diffusion equation with periodic boundary conditions 
\begin{equation}\label{eq:adperiodic}
	\begin{split}
		\partial_tu+a\partial_xu&=\partial_x^2u,\\
		u(x,0)&=\sin(2\pi x),\\
		u(0,t)&=u(1,t),
	\end{split}
\end{equation}
where $a$ is a positive constant ($P_e=a$), $x\in[0,1]$, and $t\geq0$. We discretize the space interval $[0,1]$ to a uniform grid $\{x_k\}_{k=0}^{N}$ with $x_k=kh$, and $h=1/N$. We use second order central differences for the advection and the diffusion terms. We denote by $u_k(t)$ the approximation of $u(x_k,t)$, and the periodic boundary conditions propose that $u_0(t)=u_N(t)$. The eigenvalues of the obtained matrix are 
$$
\lambda_k=\frac2{h^2}(\cos(2k\pi h)-1)-\frac{ia}h\sin(2k\pi h), \quad k=1,\dots,N,
$$
and are located in an ellipse in the left half-plane $\IC^-$, which makes the problem typical for the comparison of the three schemes.

\begin{table}[th]
	\centering
	\begin{tabular}{|c|c|c|c|c|c|c|}
		\hline
		Method & $a$   & $Steps$ & $F_D$ evals & $F_A$ evals &  $s\_{max}$ & $L_\infty$ error  at $t=1/2$\\
		\hline
		PRKC&$0.1$ & $14|71$ &  $ 862|1860$    &  $ 56|284$  & $126|106$ & $4.8\times10^{-4}|3\times10^{-7}$ \\
		PIROCK&$0.1$ &   $13|237$    &     $789|3654$       &    $39|711$   & $150|104$    &      $1.8\times10^{-3}|7.5\times10^{-7}$         \\
		{IMPRKC}&$0.1$&$15|83$&$953|2150$&$953|2150$&$139|118$&$3.4\times 10^{-4}|3.3\times10^{-7}$\\
		\textbf{ARKC}&$0.1$ &   $14|79$    &     $886|2098$       &    $42|237$     &    $145|97$    &      $4.3\times10^{-4}|3.3\times 10^{-7}$         \\
		\hline
		PRKC&$0.5$   &   $27|77$    &    $1341|2067$ &  $108|308$ &    $57|57$    &   $8.7\times10^{-9}|3.8\times10^{-9}$  \\
		PIROCK&$0.5$ &   $13|237$    &     $789|3648$       &    $39|711$   & $150|104$    &      $1.8\times10^{-3}|7.3\times10^{-7}$         \\
		{IMPRKC}&$0.5$&$15|83$&$953|2152$&$953|2152$&$112|118$&$3.5\times 10^{-4}|2\times10^{-7}$\\
		\textbf{ARKC}&$0.5$ &   $13|79$    &  $909|2132$  &   $39|237$&$142|117$    &  $2.5\times10^{-4}|2.2\times10^{-7}$      \\
		\hline
		PRKC&$1$&$47|89$&$1803|2342$&$188|356$&$40|40$&$9\times10^{-10}|6.1\times10^{-10}$  \\
		PIROCK& $1$&$13|237$&$849|3648$&$39|711$&$200|104$&$1.7\times10^{-4}|6.6\times10^{-7}$  \\
		{IMPRKC}&$1$&$15|84$&$949|2147$&$949|2147$&$138|121$&$3.7\times 10^{-4}|4.5\times 10^{-7}$\\
		\textbf{ARKC}& $1$&$11|74$&$896|2104$&$33|222$&$194|153$&$2\times10^{-4}|3.6\times10^{-7}$  \\
		\hline
		PRKC&$2$&$90|120$&$2575|2893$&$360|480$&$29|29$& $1.2\times10^{-10}|1\times10^{-10}$  \\
		PIROCK& $2$&$13|237$&$934|3720$&$39|711$&$200|150$&$2\times10^{-6}2.5\times10^{-8}$  \\
		{IMPRKC}&$2$&$15|86$&$942|2174$&$942|2174$&$141|108$&$4.5\times 10^{-4}|2.8\times 10^{-7}$\\
		\textbf{ARKC}&$2$&$10|56$&$995|2267$&$30|168$&$228|172$& $4.8\times10^{-5}|1.8\times10^{-7}$ \\
		\hline
		PRKC&$5$&$222|229$&$3980|4049$&$888|916$&$18|18$&   $4.1\times10^{-11}|4\times10^{-11}$  \\     
		PIROCK& $5$&$14|238$&$1043|3826$&$42|714$&$182|150$&$1.8\times10^{-5}1.7\times10^{-7}$  \\
		{IMPRKC}&$5$&$16|103$&$966|2350$&$966|2350$&$148|111$&$5.5\times 10^{-4}|1.6\times 10^{-7}$\\
		\textbf{ARKC}&$5$&$12|59$&$1272|2764$&$36|177$&$237|184$& $1.9\times10^{-6}|2.9\times10^{-8}$ \\ 
		\hline 
		PRKC&$10$&$442|453$&$5738|5818$&$1768|1812$&$13|13$&   $6.2\times10^{-11}|6\times10^{-11}$  \\  
		PIROCK& $10$&$24|246$&$1574|4086$&$72|738$&$125|125$&$1.3\times10^{-3}|1.2\times10^{-5}$  \\
		{IMPRKC}&$10$&$23|149$&$1227|2819$&$1227|2819$&$106|111$&$9\times 10^{-4}|2.1\times 10^{-6}$\\
		\textbf{ARKC}&$10$&$15|84$&$1359|3207$&$45|252$&$234|160$& $5.4\times10^{-6}|7.3\times10^{-8}$ \\    
		\hline     
		PRKC&$12$&$530|543$&$6355|6436$&$2120|2172$&$12|12$&   $6.4\times10^{-11}|6.2\times10^{-11}$  \\
		PIROCK& $12$&$28|255$&$1750|4306$&$84|765$&$104|104$&$1.1\times10^{-2}|4.3\times10^{-6}$  \\
		{IMPRKC}&$12$&$27|170$&$1353|3036$&$1353|3036$&$92|92$&$2.1\times 10^{--3}|3.1\times 10^{-6}$\\
		\textbf{ARKC}&$12$&$18|104$&$1557|3593$&$54|312$&$196|150$& $3.5\times10^{-5}|4.3\times10^{-7}$ \\      
		\hline     
	\end{tabular}
	\captionof{table}{Comparison of ARKC, PRKC, and PIROCK for the linear advection--diffusion problem \eqref{eq:adperiodic}. The numbers on the left correspond to $tol=10^{-2}$, while those on the right correspond to $tol=10^{-5}$.}
	\label{tableadper}
\end{table}

For the numerical experiments, we take $N=150$, $t\in[0,1/2]$, and we fix $Atol=Rtol=tol$. The initial step is fixed to $10^{-3}$. The number of rejected steps is always very small and thus neglected. We can clearly see in Table \ref{tableadper} that PRKC can compete with our scheme ARKC only in the very small Peclet number regime, while For moderate and large Peclet number, ARKC is notably better. On the other hand, the cost of ARKC is close to that of PIROCK for moderate $P_e$ with small advantage for the latter when using large tolerance $10^{-2}$ . However, ARKC becomes cheaper and more accurate at the same time for large values of $P_e$ (in our experiment for $P_e=10$ and $12$).  For small tolerance $10^{-5}$ ARKC outperforms PIROCK in all Peclet number regimes. This is expected because the damping (and so the vertical with of the stability region) is fixed for PIROCK (when $F_A\neq0$, PIROCK has $\delta_s=0.43s^2$ and $\alpha_s\simeq0.53s+0.5$). The adaptivity of ARKC allows to continuously increase the vertical width of its stability region which lets it to be more flexible with respect to the change in $P_e$. One should not forget that the explicit availability of ARKC coefficients helps a lot in making the scheme adaptive with respect to the change in $P_e$, the feature that is missing in PIROCK.

{In the small Peclet number regime, IMPRKC performs very similar to standard RKC method, which is expected, and thus it is outperformed by ARKC and the other two schemes. For large Peclet numbers, IMPRKC outperforms PRKC and its cost is close to PIROCK, while ARKC performs much better due to the very small number of evaluations of the advection term compared to that of IMPRKC.}

The smaller number of steps needed in ARKC compared to the three other methods, leads to a significantly lower number of $F_A$ evaluations no matter how big is the number of stages.

\subsection{Burgers equation with a nonlinear reaction term}

As an example of a PDE with variable Peclet number, we consider the following Burgers equation with a  nonlinear reaction term and periodic boundary conditions

\begin{equation}\label{eq:burgers}
	\begin{split}
		\partial_tu+10u\partial_xu&=\partial^2_xu+\sin(u^2),\quad (x,t)\in[0,1]\times[0,1/2],\\
		u(x,0)&=1+\sin(2\pi x),\\
		u(0,t)&=u(1,t) \quad \forall \, t\in[0,1/2].
	\end{split}
\end{equation}
We discretize the above equation in space using second order central differences with $\Delta x=10^{-2}$, into $N+1$ grid points with $N=100$. We calculate a reference solution using the Radau IIA method of order 5 \cite{HaW96}.

\begin{figure}[h]
	\centering
	\includegraphics[width=0.5\linewidth]{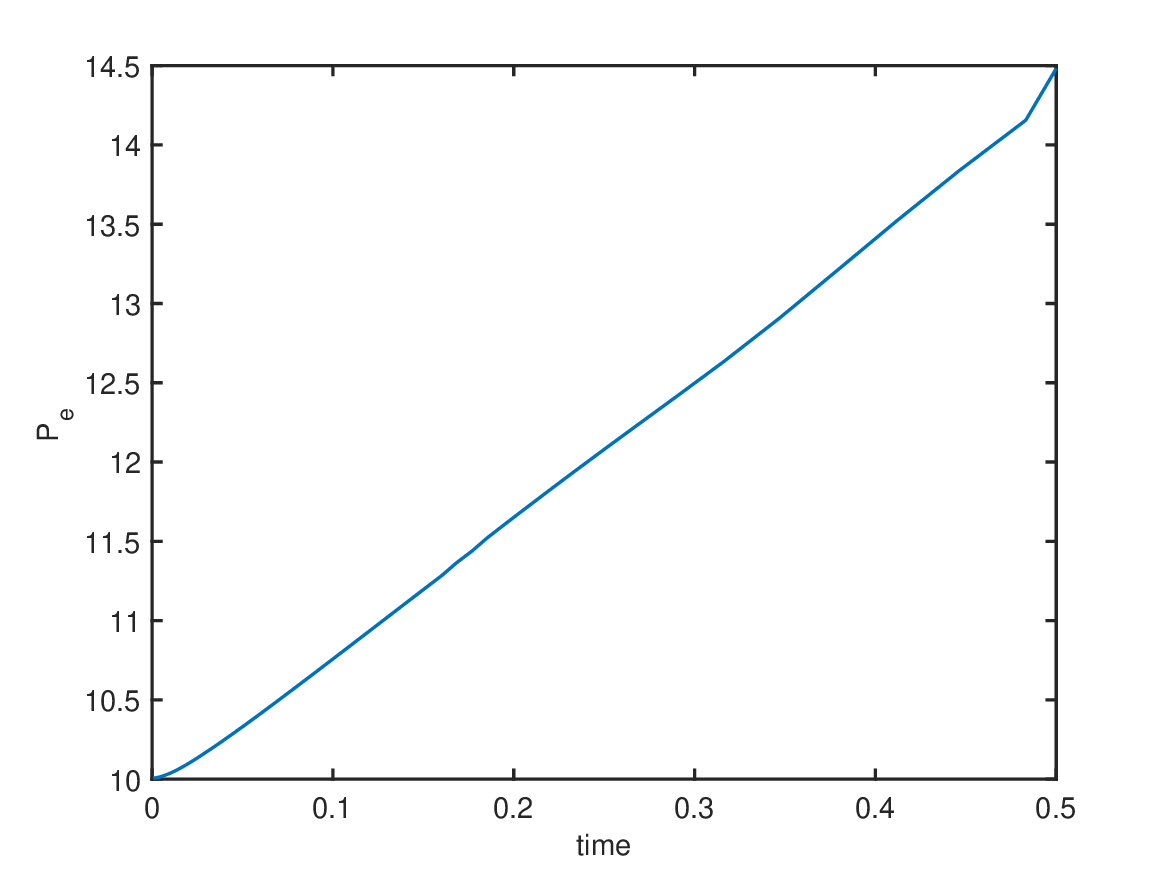}
	\caption{Change in Peclet number with respect to time in problem \eqref{eq:burgers}.}
	\label{fig:pe}
\end{figure}
In Figure \ref{fig:pe}, we plot the Peclet number of equation \eqref{eq:burgers} as a function of time. we see that it is variable and of quite large magnitude.

\begin{figure}[H]
	\centering
	\includegraphics[width=0.75\linewidth]{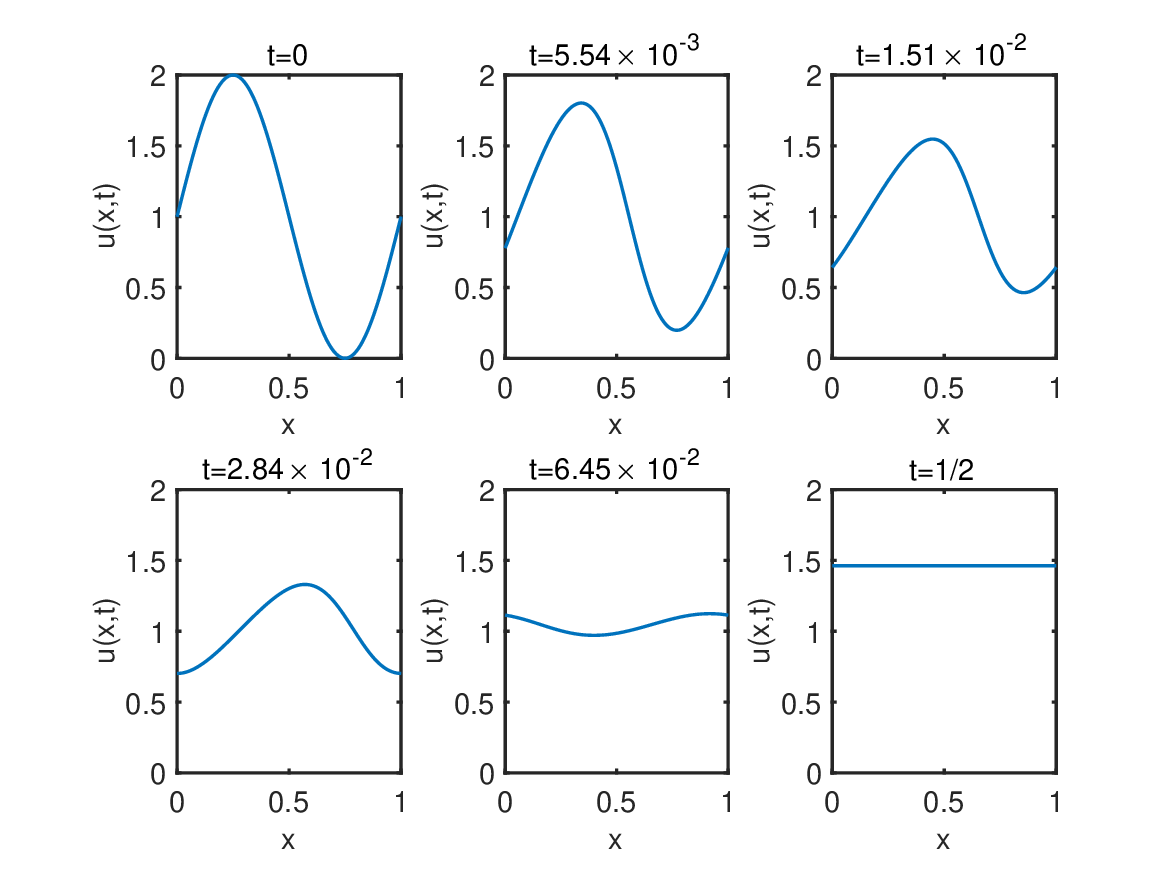}
	\caption{Solution of equation \eqref{eq:burgers} obtained using ARKC method \eqref{eq:ARKC} at different time moments.}
	\label{fig:sol}
\end{figure}
Figure \ref{fig:sol} shows the solution $u(x,t)$ of equation \eqref{eq:burgers} obtained using ARKC method \eqref{eq:ARKC} at different time moments.

\begin{figure}[htp]
	\centering
	\begin{subfigure}[t]{0.49\textwidth}
		\includegraphics[width=1\linewidth]{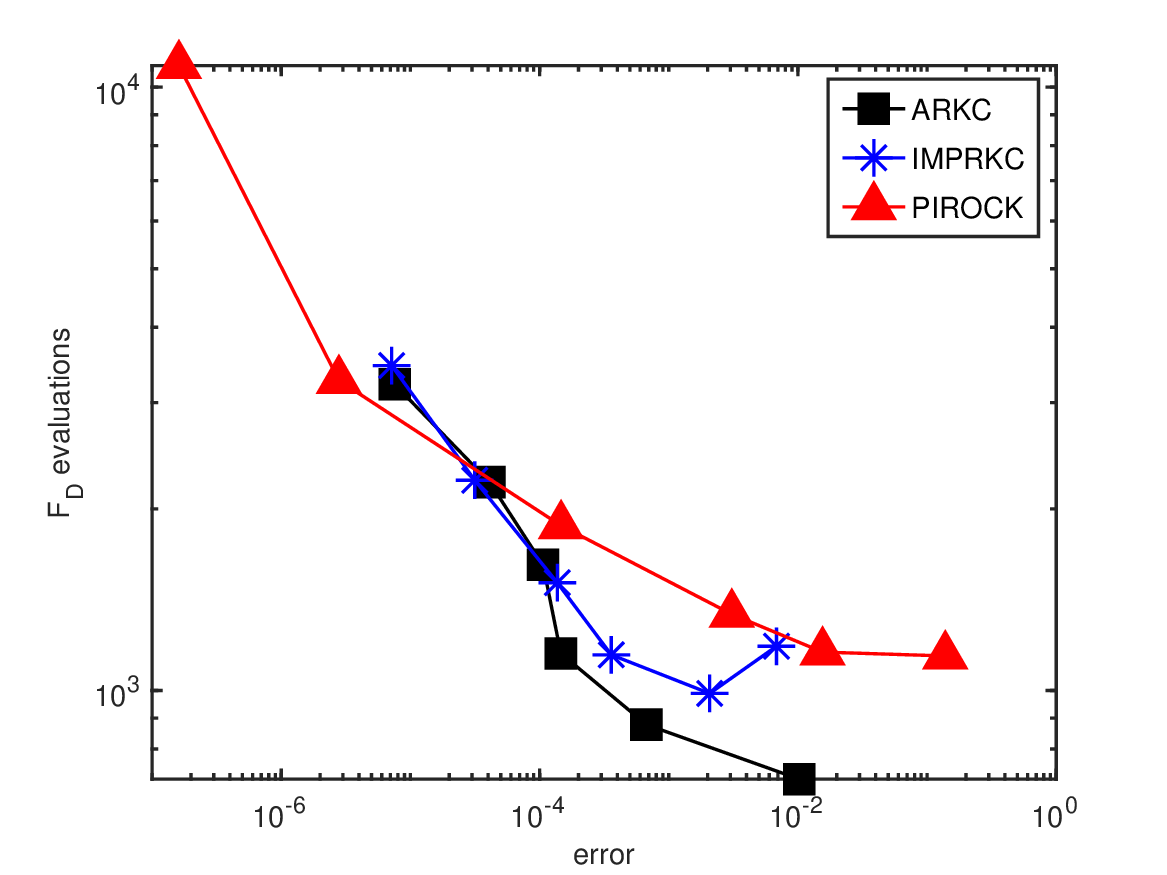}
		\caption{$F_D$ evaluations}
	\end{subfigure}
	\begin{subfigure}[t]{0.49\textwidth}
		\includegraphics[width=1\linewidth]{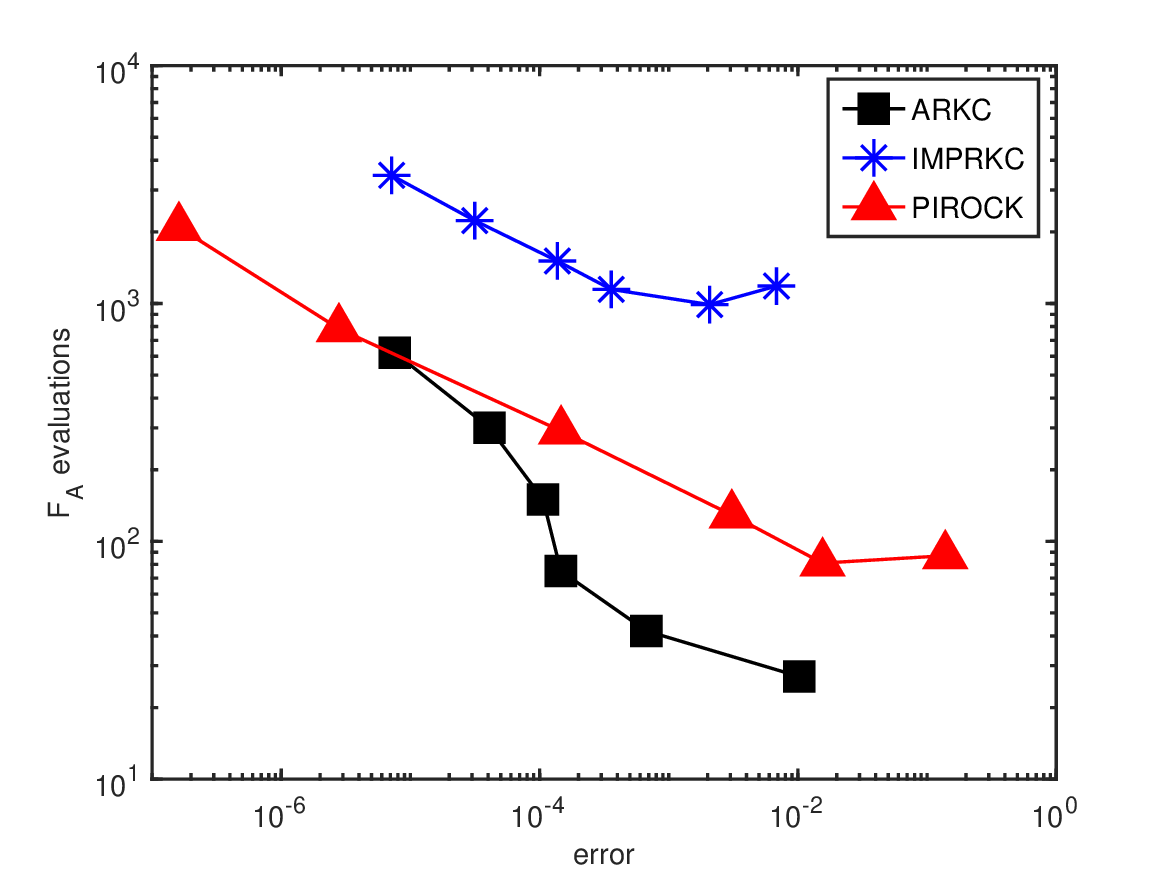}
		\caption{$F_A$ evaluations}
	\end{subfigure}\\
	\caption{Comparison between ARKC, IMPRKC, and PIROCK in terms of cost for problem \eqref{eq:burgers}.}
	\label{fig:costburgers}
\end{figure}

In Figure \ref{fig:costburgers}, we compare the number of functions evaluations needed to obtain a given accuracy for the solution of the Burgers equation \eqref{eq:burgers} using 3 different numerical methods: ARKC, {IMPRKC}, and PIROCK. The results are obtained for $tol=10^{-r},\, r=1,\dots,6$. The advantage of our scheme ARKC \eqref{eq:ARKC} is very clear. {For IMPRKC, the number of $F_D$ evaluations is reasonable, while that of $F_A$ evaluations is very large compared to ARKC and PIROCK}. For PIROCK, the fixed damping increases the number of steps and of functions evaluations with respect to ARKC. The flexibility of ARKC gives it remarkable advantage over the other schemes. We can also see that even when the number of $F_D$ evaluations for ARKC is close to that of the other two schemes, the number of evaluations of  $F_A$ containing the nonlinear advection and reaction terms stays much lower, that is because of the low number of time steps needed.

\section{Conclusion}\label{sec:conclusion}
{In this paper, we have constructed a fully adaptive second order explicit stabilized Runge--Kutta--Chebyshev time integrator for advection--diffusion--reaction PDEs, called ARKC. The new scheme is implemented using an algorithm that is able to adaptively choose the step size, the number of stages and the damping parameter of the method according to the Peclet number. The new scheme is shown to outperform existing methods in the literature for the same type of problems. This high performance is a result of the full adaptivity of Algorithm \ref{algo}, in particular, the adaptive damping that allows significant control of the form of the stability region as a function of the Peclet number.}

\paragraph{Acknowledgements}
The author is grateful to Gilles Vilmart for helpful discussions and comments.


%

\bibliographystyle{abbrv}
\bibliography{abd_biblio,HLW,complete}

\def\cprime{$'$} \def\cprime{$'$} \def\cprime{$'$}
\begin{thebibliography}{10}

\bibitem{Abd01}
A.~Abdulle.
\newblock {\em Chebyshev methods based on orthogonal polynomials}.
\newblock PhD Thesis, University of Geneva, Department of Mathematics.
  University of Geneva, 2001.

\bibitem{Abd02}
A.~Abdulle.
\newblock Fourth order {C}hebyshev methods with recurrence relation.
\newblock {\em SIAM J. Sci. Comput.}, 23(6):2041--2054, 2002.

\bibitem{Abd02b}
A.~Abdulle.
\newblock {ROCK2} and {ROCK4}: software for stiff differential equations
  (discretized parabolic problems).
\newblock {\em Codes available under \texttt{http://anmc.epfl.ch/}}, 2002.

\bibitem{Abd13c}
A.~Abdulle.
\newblock {\em Explicit Stabilized Runge--Kutta Methods}, pages 460--468.
\newblock Encyclopedia of Applied and Computational Mathematics, Springer
  Berlin Heidelberg, 2015.

\bibitem{AAV18}
A.~Abdulle, I.~Almuslimani, and G.~Vilmart.
\newblock Optimal explicit stabilized integrator of weak order 1 for stiff and
  ergodic stochastic differential equations.
\newblock {\em SIAM/ASA J. Uncertain. Quantif.}, 6(2):937--964, 2018.

\bibitem{AbdS20}
A.~Abdulle and G.~R. de~Souza.
\newblock Explicit stabilized multirate method for stiff stochastic
  differential equations.
\newblock {\em arXiv:2010.15193}, 2020.

\bibitem{AbL08}
A.~Abdulle and T.~Li.
\newblock {S-ROCK} methods for stiff {I}to {SDEs}.
\newblock {\em Commun. Math. Sci.}, 6(4):845--868, 2008.

\bibitem{AbM01}
A.~Abdulle and A.~Medovikov.
\newblock Second order chebyshev methods based on orthogonal polynomials.
\newblock {\em Numer. Math.}, 90(1):1--18, 2001.

\bibitem{AbV12a}
A.~Abdulle and G.~Vilmart.
\newblock P{IROCK}: a swiss-knife partitioned implicit-explicit orthogonal
  {R}unge-{K}utta {C}hebyshev integrator for stiff diffusion-advection-reaction
  problems with or without noise.
\newblock {\em J. Comput. Phys.}, 242:869--888, 2013.

\bibitem{AVZ13}
A.~Abdulle, G.~Vilmart, and K.~C. Zygalakis.
\newblock Weak second order explicit stabilized methods for stiff stochastic
  differential equations.
\newblock {\em SIAM J. Sci. Comput.}, 35(4):A1792--A1814, 2013.

\bibitem{A20}
I.~Almuslimani.
\newblock {\em Explicit Stabilized Methods for Stiff Stochastic Differential
  Equations and Stiff Optimal Control Problems}.
\newblock University of Geneva. PhD thesis, 2020.

\bibitem{AV19}
I.~Almuslimani and G.~Vilmart.
\newblock Explicit stabilized integrators for stiff optimal control problems.
\newblock {\em SIAM J. Sci. Comput.}, 43(2):A721--A743, 2021.

\bibitem{B71}
M.~Bakker.
\newblock Analytical aspects of a minimax problem.
\newblock 1971.
\newblock Technical Note TN 62 (in Dutch), Mathematical centre, Amsterdam.

\bibitem{HLW06}
E.~Hairer, C.~Lubich, and G.~Wanner.
\newblock {\em Geometric numerical integration}, volume~31 of {\em Springer
  Series in Computational Mathematics}.
\newblock Springer-Verlag, Berlin, second edition, 2006.
\newblock Structure-preserving algorithms for ordinary differential equations.

\bibitem{HNW93}
E.~Hairer, S.~N{\o{}}rsett, and G.~Wanner.
\newblock {\em Solving Ordinary Differential Equations I. Nonstiff Problems},
  volume~8.
\newblock Springer Verlag Series in Comput. Math., Berlin, 1993.

\bibitem{HaW96}
E.~Hairer and G.~Wanner.
\newblock {\em Solving ordinary differential equations II. Stiff and
  differential-algebraic problems}.
\newblock Springer-Verlag, Berlin and Heidelberg, 1996.

\bibitem{HV03}
W.~Hundsdorfer and J.~Verwer.
\newblock {\em Numerical solution of time-dependent
  advection-diffusion-reaction equations}, volume~33 of {\em Springer Series in
  Computational Mathematics}.
\newblock Springer-Verlag, Berlin, 2003.

\bibitem{TJ07b}
R.~Jeltsch and M.~Torrilhon.
\newblock Flexible stability domains for explicit {R}unge-{K}utta methods.
\newblock In {\em Some topics in industrial and applied mathematics}, volume~8
  of {\em Ser. Contemp. Appl. Math. CAM}, pages 152--180. Higher Ed. Press,
  Beijing, 2007.

\bibitem{KA12}
D.~I. Ketcheson and A.~J. Ahmadia.
\newblock Optimal stability polynomials for numerical integration of initial
  value problems.
\newblock {\em Commun. Appl. Math. Comput. Sci.}, 7(2):247--271, 2012.

\bibitem{SSV98}
B.~Sommeijer, L.~Shampine, and J.~Verwer.
\newblock {RKC}: an explicit solver for parabolic {PDEs}.
\newblock {\em J. Comput. Appl. Math.}, 88:316--326, 1998.

\bibitem{SV80}
B.~P. Sommeijer and J.~G. Verwer.
\newblock {\em A performance evaluation of a class of
  {R}unge-{K}utta-{C}hebyshev methods for solving semidiscrete parabolic
  differential equations}.
\newblock Afdeling Numerieke Wiskunde [Department of Numerical Mathematics],
  91. Mathematisch Centrum, Amsterdam, 1980.

\bibitem{TX20}
X.~Tang and A.~Xiao.
\newblock Improved runge–kutta–chebyshev methods.
\newblock {\em Mathematics and Computers in Simulation}, 174:59--75, 2020.

\bibitem{TJ07a}
M.~Torrilhon and R.~Jeltsch.
\newblock Essentially optimal explicit {R}unge-{K}utta methods with application
  to hyperbolic-parabolic equations.
\newblock {\em Numer. Math.}, 106(2):303--334, 2007.

\bibitem{VHS80}
P.~Van~der Houwen and B.~Sommeijer.
\newblock On the internal stage runge-kutta methods for largem-values.
\newblock {\em Z Angew Math Mech}, 60:479--485, 1980.

\bibitem{HoS80}
P.~J. van~der Houwen and B.~P. Sommeijer.
\newblock On the internal stability of explicit, {$m$}-stage {R}unge-{K}utta
  methods for large {$m$}-values.
\newblock {\em Z. Angew. Math. Mech.}, 60(10):479--485, 1980.

\bibitem{VHS90}
J.~Verwer, W.~Hundsdorfer, and B.~Sommeijer.
\newblock Convergence properties of the runge-kutta-chebyshev method.
\newblock {\em Numer. Math.}, 57:157--178, 1990.

\bibitem{VS04}
J.~G. Verwer and B.~P. Sommeijer.
\newblock An implicit-explicit {R}unge-{K}utta-{C}hebyshev scheme for
  diffusion-reaction equations.
\newblock {\em SIAM J. Sci. Comput.}, 25(5):1824--1835, 2004.

\bibitem{VSH04}
J.~G. Verwer, B.~P. Sommeijer, and W.~Hundsdorfer.
\newblock R{KC} time-stepping for advection-diffusion-reaction problems.
\newblock {\em J. Comput. Phys.}, 201(1):61--79, 2004.

\bibitem{Zb11}
C.~J. Zbinden.
\newblock Partitioned {R}unge-{K}utta-{C}hebyshev methods for
  diffusion-advection-reaction problems.
\newblock {\em SIAM J. Sci. Comput.}, 33(4):1707--1725, 2011.

\end{thebibliography}

\appendix
\section{Damping and number of stages for some choices of Peclet number}
\label{app}

\begin{center}
	\begin{tabular}{ |c|c|c|c|c|c|c|c|c|c|c|c|} 
		\hline
		$s$ & $<11$& $<21$ & $<31$ & $<41$ & $<51$ & $<61$ & $<71$ & $<81$ & $<91$ & $<101$ \\
		\hline
		$\eta$ & $0.15$ & $ 0.6$ &$1$&$1.4$&$1.7$&$2.1$&$2.4$&$2.7$&$3$ &$3.3$\\
		\hline
		$s$ & $<121$& $<141$& $<161$ & $<181$ & $<201$ & $<251$ & $<301$ & $<401$ & $<501$&\\
		\hline
		$\eta$ &$3.7$ & $4.1$ & $4.5$ &$4.9$&$5.3$&$6$&$6.6$&$7.7$&$8.8$&\\
		\hline
	\end{tabular}
	\captionof{table}{$1/2<P_e'\leq1,\,1/4<\rho_A/\sqrt{\rho_D}\leq1/2$.}
			\label{table1}
\end{center}

\begin{center}
	\begin{tabular}{ |c|c|c|c|c|c|c|c|c|c|} 
		\hline
		$s$ & $<11$& $<21$ & $<31$ & $<41$ & $<51$ & $<61$ & $<71$ & $<81$\\
		\hline
		$\eta$  & $0.7$ & $ 1.5$ &$2.3$&$2.9$&$3.5$&$4$&$4.5$&$4.9$\\
		\hline
		$s$ & $<91$ & $<101$ & $<141$ & $<181$ & $<251$ & $<301$ & $<401$ & $<501$\\
		\hline
		$\eta$  &$5.2$ &$5.5$&$6.7$ & $7.7$ & $8.8$ &$9.8$&$11$&$12$\\
		\hline
	\end{tabular}
	\captionof{table}{$1<P_e'\leq3/2,\,1/2<\rho_A/\sqrt{\rho_D}\leq3/4$.}
			\label{table1p5}
\end{center}

\begin{center}
	\begin{tabular}{ |c|c|c|c|c|c|c|c|c|c|c|c|} 
		\hline
		$s$ & $<11$& $<21$ & $<31$ & $51$& $<71$ & $<111$ & $<151$&$<311$&$501$\\
		\hline
		$\eta$ & $1$ & $ 2.5$ &$3.5$&$4.8$&$6$&$7.8$&$9$&$12.5$&$15$\\
		\hline
	\end{tabular}
	\captionof{table}{$3/2<P_e'\leq2,\,3/4<\rho_A/\sqrt{\rho_D}\leq1$.}
			\label{table2}
\end{center}

\begin{center}
	\begin{tabular}{ |c|c|c|c|c|c|c|c|c|c|c|c|} 
		\hline
		$s$ & $<11$& $<21$ & $<31$ & $51$& $<71$ & $<111$ & $<151$&$<311$&$501$\\
		\hline
		$\eta$ & $2$ & $ 3.8$ &$5$&$6.8$&$8$&$10.4$&$12$&$16$&$19$\\
		\hline
	\end{tabular}
	\captionof{table}{$2<P_e'\leq2\sqrt{2},\,1<\rho_A/\sqrt{\rho_D}\leq\sqrt{2}$.}
			\label{table2s2}
\end{center}

\begin{center}
	\begin{tabular}{ |c|c|c|c|c|c|c|c|c|c|c|c|} 
		\hline
		$s$ & $<11$& $<31$ & $<71$ & $<151$ & $<311$ & $<501$\\
		\hline
		$\eta$ & $4$ & $ 9$ &$13.5$&$18$&$23$&$27$\\
		\hline
	\end{tabular}
	\captionof{table}{$P_e'>2\sqrt{2},\,\rho_A/\sqrt{\rho_D}>\sqrt2$.}
			\label{tablelarge}
\end{center}

{One can get more tables and increase the adaptivity of the algorithm with respect to damping. This will for sure increase the performance of the method. However, the method performs already very well with the tables we provided.}

\end{document}